\newtheorem{theorem}{Theorem}[section]
\newtheorem{lemma}[theorem]{Lemma}
\newtheorem{corollary}[theorem]{Corollary}
\newtheorem{proposition}[theorem]{Proposition}
\newtheorem{definition}[theorem]{Definition}
\newenvironment{proof}{{\par\addvspace{0.1cm}\noindent \bf Proof. }}{\hfill$\Box$\par\medskip}
\newtheorem{remark}[theorem]{Remark}
\def\a{\alpha}
\def\RR{\mathbb{R}}
\def\QQ{\mathbb{Q}}
\def\NN{\mathbb N}
\def\de{\delta}
\def\la{\lambda}
\def\si{\sigma}
\def\ga{\gamma}
\def\sotp{{\,\mbox{\rm \footnotesize s.o.3-p.}}}
\def\scsotp{{\,\mbox{\rm \scriptsize s.o.3-p.}}}
\def\otp{{\mbox{\rm \footnotesize o.3-p.}}}
\def\scotp{{\mbox{\rm \scriptsize o.3-p.}}}
\def\otwop{{\mbox{\rm \footnotesize o.2-p.}}}
\def\scotwop{{\mbox{\rm \scriptsize o.2-p.}}}
\begin{document}

\begin{frontmatter}[classification=text]

\title{Magnitude function determines generic finite metric spaces} 

\author[jo]{Jun O'Hara\thanks{Supported by JSPS KAKENHI Grant Number 19K03462.}}
\begin{abstract}
We give sufficient conditions for a finite metric space to be determined by the magnitude function. 
In particular, a generic finite metric space such that the distances between the points are rationally independent is determined by the asymptotic behavior of the magnitude function. 
\end{abstract}
\end{frontmatter}



\section{Introduction}

\phantom{a}
\vspace{-0.5cm}

The magnitude is a numerical invariant for a metric space introduced by Leinster \cite{L13} using the theory of enriched categories. It gives a function, which we call the magnitude function, on $(0,\infty)$ as the magnitude of the space similarly expanded by $t$ times. 
The magnitude function has information of the distance and scale of the space. 
Since its introduction, it has been intensively studied along with the magnitude (co)homology derived from it (\cite{HW}, \cite{LS}), involving various fields of mathematics, not only algebraic but also analytic. 

Let $(X,d)$ with $X=\{P_1,\dots,P_n\}$ be a finite metric space. We will call each pair $(P_i,P_j)$ $(i\ne j)$ an edge and the distance between them $d(P_i,P_j)$ the {\em edge length} and denote it by $d_{ij}$. 
Let $tX$ $(t>0)$ denote the metric space $(X,td)$. 
The {\em similarity matrix} $Z_X(t)$ is given by $Z_X(t)=\left(\exp(-t\, d_{ij})\right)_{i,j}$. 
In this paper we only use the cases when $Z_X(t)$ is invertible. 
Then the {\em magnitude function}, denoted by $|tX|$ or $M_X(t)$, is given by the sum of all the entries of $Z_X(t)^{-1}$. 
It is known that $M_X(t)$ is an increasing function of $t$ for $t\gg0$ and that $\lim_{t\to\infty}M_X(t)=\#X$ (\cite{L13} Proposition 2.2.6). 
A space $X$ is said to have {\em one-point property} if $\lim_{t\to0^+}M_X(t)=1$. 
Any compact subset of Euclidean space has one-point property (\cite{LM23} Theorem 3.1). 

An alternative expression of the magnitude can be obtained by putting $q=e^{-t}$ (\cite{L19}). The similarity matrix is then given by $z_X(q)=\left(q^{d_{ij}}\right)_{i,j}$. 
The determinant and the sum of all the cofactors of $z_X(q)$ are both ``{\em generalized polynomials}'' that allow non-integer exponents. 
Since the determinant has the constant term $1$, it is invertible in the field of generalized rational functions. The sum of all the entries of $z_X(q)^{-1}$ is called the {\em formal magnitude}, and denoted by $m_X(q)$. 
In this article we assume that $m_X(q)$ is expanded as a ``{\em generalized formal power series}'' that allows non-integer exponents. 

One of the basic questions would be to what extent a space can be identified by the magnitude. 
Gimperlein, Goffeng and Louca showed that in the case of smooth manifolds with boundaries, information such as the volumes of the manifolds and their boundaries, and the integrals of (covariant derivatives of) curvatures can be obtained from the asymptotic expansion of the magnitude function at large scale (\cite{GGL} Theorem 2.1). It follows that balls can be identified by the magnitude function. 

In this article we study the problem for finite metric spaces. 
In the case of graphs, there exist examples where the magnitudes are the same but the graphs are not isometric. For example, Leinster gave the example in Figure \ref{two_graphs} (\cite{L13} Example 2.3.5)\footnote{Even if we change the lengths of the three edges from $1$ to $a, b,c$ in the right and left graphs, the magnitude functions of the two graphs are the same. It follows that the left graphs with edge lengths $a,b,c$ in order and $b,a,c$ in order have the same magnitude although they are not isometric if $a\ne b$. This gives an example of a pair of finite subspaces of Euclidean space that are not isometric but have the same magnitude. }. 
\begin{figure}[htbp]
\begin{center}
\includegraphics[width=.6\linewidth]{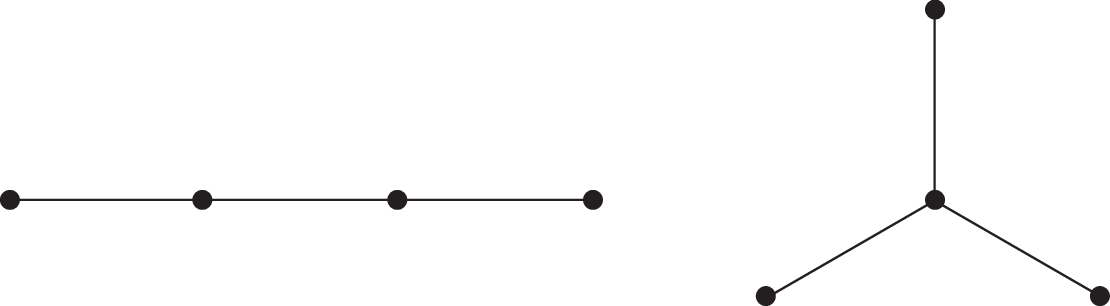}
\caption{Both graphs have the same magnitude $\displaystyle \frac{4-2q}{1+q}$ $(q=e^{-t})$.}
\label{two_graphs}
\end{center}
\end{figure}
There are many such examples obtained by applying Whitney twist (\cite{L19} Section 5). 

On the other hand, if we restrict ourselves to finite subsets of Euclidean spaces, we can expect magnitude to be a powerful tool for distinguishing spaces, since there are examples of spaces that cannot be distinguished by the discrete version of natural concepts in integral geometry but can be distinguished by magnitude. 
In fact, numerical experiments shows that the magnitude can distinguish 30 possible positions of four vertices of tetrahedra with the same set of edge lengths $\{7,8,9,10,11,12\}$. 
Here, the set of edge lengths with multiplicity can be considered as a discrete analogue of the distribution of interpoint distances, which is one of the basic notions in integral geometry. 
Exchanging the lengths of two edges could change the inverse of the similarity matrix $Z_X(t)$ and thus the magnitude. Conceptually speaking, we can say that the magnitude includes not only the edge length information, but also some combinatorial information, which helps us to identify the space.

It should be noted that the distribution of interpoint distances is also useful for identification of spaces. 
Through the Mellin transform, it yields {\sl Brylinski's beta function} (\cite{Bry}), which gives various geometric information such as the volumes of the manifold and its boundary, and integral of curvatures (\cite{FV} Theorem 4.1, \cite{OS} Proposition 4.7). 
Thus the balls and, under certain conditions, the circles and $2$-spheres can be identified by Brylinski's beta function (\cite{O19} Theorem 3.4). 
For a convex body (i.e. a compact convex subset of Euclidean space with non-empty interior), the interpoint distance distribution is equivalent to another basic notion in integral geometry, the distribution of chord lengths of the intersection of the convex body and random lines. 
Blaschke asked (\cite{B} p.51) if the planar domain is determined by the chord length distribution. 
The answer is no since there is a counterexample (\cite{MC} Figure 1), although Waksman claimed that a sufficiently asymmetric convex polygons is characterized by this distribution (\cite{W} Theorem 5.6). 

In this paper we give sufficient conditions for a finite metric space $X$ to be determined by the magnitude function. 
If the edge lengths are rationally independent, a finite metric space is determined by the asymptotic behavior of the magnitude function at large scale $t\to +\infty$. 
When $\#X=3$, $X$ is determined by the asymptotic behavior of the magnitude function at small scale $t\to 0^+$ without any condition. 
If we use Roff-Yoshinaga's parametrization of finite metric spaces (\cite{RY} Section 2), what is excluded by our genericity condition is codimension one and hence measure zero. 

%
%

\if0 
\bigskip
Acknowledgement: The author thanks Kiyonori Gomi for helpful suggestions. 
He also thanks the anonymous reviewer for careful reading and for the information of (\ref{eqL}), which simplifies the previous proofs. 
\fi 

\section{Main Theorem}

\begin{definition} \rm 
Let $X$ be a metric space which consists of $n$ points and let $\ell_1,\dots, \ell_N$ be the edge lengths, where $N={n\choose2}$. 
A map 
\[A\colon \left\{\{i,j\}\,|\,1\le i<j\le n \right\}\to\{\ell_1,\dots,\ell_N\}\]
is called {\em combinatorial data} (of edge lengths) when there exists a labeling of points in $X$ such that $A(\{i,j\})=d(P_i,P_j)$ for any $i,j$ $(i<j)$. 

We write $d_{i,j}=d(P_i,P_j)$ in what follows. 
\end{definition}

A {\em multiset} is a set with multiplicity. We will use the symbol $[\>]$ for multisets. For example, although $\{a,a,b\}$ and $\{a,b\}$ are same as a set, $[a,a,b]$ and $[a,b]$ are different multisets. 

\begin{definition}\label{def_2} \rm 
We say that a finite metric space $X$ is {\em determined by the magnitude function} if the multiset of the edge lengths and the combinatorial data are obtained from the magnitude function $M_X(t)$. 
%
\end{definition}

Recall that the cardinality of a finite metric space is obtained from the magnitude function by $\#X=\lim_{t\to\infty}M_X(t)$ (\cite{L13} Proposition 2.2.6, \cite{LW} Theorem 3). 

\begin{definition} \rm 
A finite metric space is, respectively, {\em rationally independent} (ri); {\em $p$-generic} ($\mbox{g}_p$) for $p$ a natural number; or satisfying the {\em strict virtual triangle inequality} (svti) if the following condition is satisfied respectively:
\begin{enumerate}
\item[($\mbox{ri}$)] The edge lengths are linearly independent over $\QQ$. In other words, the sums of edge lengths do not match for different combinations with multiplicity.
\item[($\mbox{g}_p$)] The sums of edge lengths do not match for different combinations of $p$ or fewer edges with multiplicity.
\item[(svti)] $\displaystyle \max_{1\le i,j\le n} d_{ij}<2\min_{1\le k,l\le n, k\ne l} d_{kl}$ \qquad ($n=\#X$).
\end{enumerate}
\end{definition}

Note that the rational independence implies $p$-genericity for any $p$. 
We remark that our conditions are not well suited for graphs. Any graph with graph metric with two edges or more is rationally dependent. Any connected graph except for complete graphs does not satisfy the strict virtual triangle inequality condition. 
A set consisting of $n$-simplex vertices close to a regular $n$-simplex in $\RR^{n+1}$ satisfies the strict virtual triangle inequality condition, whereas a set consisting of the vertices of a needle-shaped tetrahedron does not satisfy this condition.

\begin{theorem}\label{theorem}
A finite metric space $X$ 
is determined by the magnitude function if $X$ satisfies one of the following conditions ($n=\#X$). 

\begin{enumerate}
\item $n=3$. 
\item $X$ is rationally independent. 
\item $X$ is $5$-generic and satisfies the strict virtual triangle inequality condition. 
\item $n=4$ and $X$ satisfies the strict virtual triangle inequality condition. 
\end{enumerate}

\end{theorem}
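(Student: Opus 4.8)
The plan is to reconstruct the \emph{formal magnitude} $m_X(q)$ (with $q=e^{-t}$) from the magnitude function and then to peel off, in order, the multiset of edge lengths and the combinatorial data. For a finite metric space $m_X$ is a generalized rational function of $q$ whose denominator $\det z_X(q)$ has constant term $1$, so $M_X(t)=m_X(e^{-t})$ has a well-defined large-scale asymptotic expansion $\sum_\lambda c_\lambda e^{-\lambda t}$ as $t\to+\infty$ whose exponents $\lambda$ and coefficients $c_\lambda$ are individually determined by $M_X$; thus the magnitude function recovers the generalized formal power series $m_X(q)=\sum_\lambda c_\lambda q^\lambda$ and hence $m_X$ itself. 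For $n=3$ I shall instead use the Taylor expansion of $M_X(t)$ at $t\to0^+$. The basic tool is the \emph{walk expansion}: writing $z_X(q)=I+A(q)$ with $A(q)_{ij}=q^{d_{ij}}$ off the diagonal and $0$ on it, the Neumann series $z_X(q)^{-1}=\sum_{k\ge0}(-1)^kA(q)^k$ yields
\[
m_X(q)=\sum_{w}(-1)^{|w|}q^{\ell(w)},
\]
the sum running over all walks $w=(i_0,\dots,i_{|w|})$ in the complete graph on $X$ with $i_{s-1}\ne i_s$ (the $n$ trivial walks producing the constant term $n$), where $|w|$ is the combinatorial length and $\ell(w)$ the total length of the edges traversed. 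Thus each $c_\lambda$ is a signed count of walks of metric length $\lambda$, the walk $w$ carrying sign $(-1)^{|w|}$.

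To recover the multiset of edge lengths, note that a walk of combinatorial length $k$ has metric length at least $k\,d_{\min}$, where $d_{\min}$ is the shortest edge length; hence for $\lambda<2d_{\min}$ only one-step walks contribute, so $d_{\min}$ is the smallest exponent of $m_X(q)-n$ and $c_\lambda=-2\,\#\{\text{edges of length }\lambda\}$ for all $\lambda<2d_{\min}$. Under the strict virtual triangle inequality every edge length lies in $[d_{\min},2d_{\min})$, so this recovers the whole multiset at once. Under rational independence the edge lengths are distinct, and I recover them in increasing order: if $\ell_{(1)}<\dots<\ell_{(k)}$ are the $k$ smallest, then $\ell_{(k+1)}$ is the smallest exponent of $m_X(q)-n$ that is not a nonnegative integer combination of $\ell_{(1)},\dots,\ell_{(k)}$; rational independence both justifies this identification and excludes any cancellation, since walks of a common metric length then share the same combinatorial length, hence the same sign. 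For $n=3$ the edge-length multiset already determines $X$, and it can be obtained as above or, following the phrasing of the theorem, from the small-scale behavior: one checks that $\det z_X(q)$ and the sum of its cofactors each vanish to order exactly $2$ at $q=1$ with equal second derivatives there, so that $\lim_{t\to0^+}M_X(t)=1$ for every metric triangle, and that finitely many further coefficients of the expansion of $m_X$ at $q=1$ are rational expressions in the elementary symmetric functions of the three edge lengths from which those functions, and hence the multiset, can be recovered.

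Knowing the edge lengths, the metric spaces realizing that multiset form a finite set, and it remains to single out $X$. The decisive input is the contribution of the two-step walks: besides the doubled edges $q^{2d_{ij}}$ they contribute $2\sum q^{\ell(e)+\ell(e')}$ over \emph{cherries} --- unordered pairs $\{e,e'\}$ of distinct edges sharing a vertex. Under rational independence, or under $5$-genericity together with the svti condition, the exponents arising from two-step walks are disjoint from those arising from $k$-step walks with $k\ge3$: for $k=3$ such a coincidence would equate a sum of two edge lengths with a sum of three (forbidden by $5$-genericity, or by the well-definedness of $|w|$ as a function of $\ell(w)$ under rational independence), and for $k\ge4$ the bound $\ell(w)\ge4d_{\min}>2d_{\max}$ separates them under svti; moreover $5$-genericity (respectively rational independence) forces every two-step exponent to come from a unique pair of edges. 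Consequently, for each pair of edge lengths $\ell,\ell'$ the coefficient of $q^{\ell+\ell'}$ is $2$ or $0$ according as the two corresponding edges are adjacent or not, and we recover the edge-adjacency graph $G$, namely the line graph $L(K_n)$ with its length labels. For $n\ge5$ the maximal cliques of $L(K_n)$ are precisely the $n$ vertex stars $\{e:v\in e\}$, so the points of $X$ and all their incidences --- hence $X$ --- are reconstructed. For $n=3$ the multiset of edge lengths was already enough. For $n=4$, where $L(K_4)$ is the octahedron and its maximal cliques (the four vertex stars together with the four faces) fail to distinguish stars from triangles, the adjacency data yield only the three opposite pairs of edges, leaving two candidate spaces; these are separated by the multiset of triangle perimeters carried by the closed three-step walks, which can be read off under svti by estimating the overlapping contributions of the two-, four- and five-step walks. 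The case $n=4$ under svti alone, in which even the two-step separation is not automatic, is handled by a direct analysis of the low-order part of $m_X(q)$ in the same spirit.

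The heart of the matter is the bookkeeping of the walk expansion, and the main obstacle is to control (i) cancellations among walks of equal metric length but opposite sign and (ii) overlaps between the exponent ranges of $k$-step and $k'$-step walks --- which is exactly what the genericity hypotheses are built to defeat: rational independence eliminates (i) outright and turns $|w|$ into a function of $\ell(w)$, while $p$-genericity together with svti separates the small-$k$ contributions that the crude estimate $\ell(w)\ge|w|\,d_{\min}$ alone cannot. I expect the genuinely delicate points to be the $n=4$ case, where the line-graph reconstruction is honestly ambiguous and one must descend to three-step (triangle) data, and the small-scale computation for $n=3$.
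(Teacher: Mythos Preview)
Your approach to parts (2) and (3) is correct but genuinely different from the paper's. You reconstruct the labelled line graph $L(K_n)$ from the two-step (cherry) contributions to $m_X(q)$ and recover $X$ from its clique structure; the paper instead extracts from the three-step contributions the multisets $\widetilde{\mathcal{S}}_\triangle$ and $\widetilde{\mathcal{S}}_\otp$ of edge-triples forming triangles and open $3$-paths, and reassembles $X$ by a gluing lemma for $3$-generic spaces. Your route is more elementary and in fact slightly stronger: isolating the cherry coefficients requires only separating $2$-sums from $k$-sums with $k\le3$ (handled by $(\mathrm{g}_3)$) and with $k\ge4$ (handled by svti), so your argument for (3) goes through under $(\mathrm{g}_3)+$svti rather than the paper's $(\mathrm{g}_5)+$svti. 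One slip: the maximal cliques of $L(K_n)$ are the $n$ vertex stars \emph{and} the $\binom{n}{3}$ triangle cliques; for $n\ge5$ you want the maximal cliques of size $n-1>3$, which are precisely the stars. For $n=4$ under (ri) your disambiguation via triangle perimeters is fine, since the coefficient $-6$ singles out triangles and the two swap-candidates have distinct multisets of triangle edge-triples.

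For part (4), however, your sketch is a genuine gap. Without genericity the two- and three-step coefficients can cancel in complicated ways, and the paper's argument is correspondingly delicate: after recovering the edge-length multiset it subtracts explicit symmetric corrections from $m_X(q)$ to isolate the opposite-edge sums, and runs a case analysis to pin down the three opposite pairs. The key step you do not anticipate is the last one: the two remaining candidates, related by swapping one opposite pair, are separated not by any large-scale datum but by the \emph{small-scale} limit $M_1=\lim_{t\to0^+}M_X'(t)$. The paper computes $M_1$ as an explicit rational function of the $d_{ij}$ whose numerator is symmetric under the swap while the denominator changes by exactly $2(d_{12}-d_{34})(d_{13}-d_{24})(d_{14}-d_{23})$, a quantity that vanishes precisely when the two candidates are isometric. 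Your ``direct analysis of the low-order part of $m_X(q)$ in the same spirit'' would have to reproduce this, and it is not at all clear that large-scale information suffices. Likewise for part (1): the outline is right, but the substance is the explicit verification (carried out in the paper) that $M_1,M_2,M_3$ determine the elementary symmetric functions of the three edge lengths.
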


The proof of the theorem yields 
\begin{proposition}\label{complete_graph}
The complete graph is determined by the magnitude function. 
\end{proposition}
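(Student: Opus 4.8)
The plan is to compute $M_{K_n}(t)$ in closed form and then recover all the required data from its asymptotics as $t\to+\infty$. With $q=e^{-t}$ the similarity matrix of the complete graph on $n$ points is $z_{K_n}(q)=(1-q)I+qJ$, where $I$ is the $n\times n$ identity and $J$ the all-ones matrix. Since $J$ has eigenvalues $n$ (simple) and $0$ (multiplicity $n-1$), one gets $\det z_{K_n}(q)=(1+(n-1)q)(1-q)^{n-1}$, which is nonzero for $q\in(0,1)$, so the magnitude is defined, and a one-line verification gives $z_{K_n}(q)^{-1}=\frac{1}{1-q}\bigl(I-\frac{q}{1+(n-1)q}J\bigr)$. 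Summing all entries,
\[
M_{K_n}(t)=\frac{n}{1+(n-1)e^{-t}}=n-n(n-1)e^{-t}+n(n-1)^2e^{-2t}-\cdots .
\]

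First I would note that $n=\#K_n=\lim_{t\to\infty}M_{K_n}(t)$, so the number of points --- and hence the number of edges $N=\binom n2$ --- is read off from the magnitude function. Next I would invoke the general form of the large-$t$ expansion that underlies the proof of Theorem~\ref{theorem}: for any finite metric space $Y$ on $n$ points, writing $Z_Y(t)=I+E(t)$ with $E(t)$ the matrix whose $(i,j)$ entry is $e^{-t\,d_{ij}}$ for $i\ne j$ and $0$ on the diagonal, the Neumann series yields
\[
M_Y(t)=n-\sum_{i\ne j}e^{-t\,d_{ij}}+\sum_{m\ge2}(-1)^m\,\mathbf 1^{\mathsf T}E(t)^m\mathbf 1 ,
\]
and every exponent occurring in $\mathbf 1^{\mathsf T}E(t)^m\mathbf 1$ is a sum of $m$ edge lengths of $Y$, hence $\ge m\mu_Y$ where $\mu_Y=\min_{i<j}d_{ij}$. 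Because the set of all finite sums of edge lengths is discrete in $(0,\infty)$, the resulting exponential-sum expansion of $M_Y$ is unique; consequently its smallest exponent beyond the constant term is exactly $\mu_Y$, occurring with coefficient $-2\cdot\#\{\,\{i,j\}:d_{ij}=\mu_Y\,\}$, since the terms with $m\ge2$ contribute only at exponents $\ge2\mu_Y>\mu_Y$.

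Finally I would compare this with the explicit expansion of $M_{K_n}$: any finite metric space $Y$ with $M_Y=M_{K_n}$ must have shortest edge length $\mu_Y=1$ with multiplicity $n(n-1)/2=\binom n2$. As $Y$ has altogether only $\binom n2$ edges, every edge of $Y$ has length $1$; thus the multiset of edge lengths of $Y$ equals $[1,\dots,1]$, the same multiset as for $K_n$, and the only combinatorial datum compatible with an all-equal multiset of edge lengths is the constant map. Hence both the multiset of edge lengths and the combinatorial data of $K_n$ are recovered from $M_{K_n}(t)$, which is the claim. The only things to check --- uniqueness of the exponential-sum expansion and the irrelevance of the higher-order terms $\mathbf 1^{\mathsf T}E^m\mathbf 1$ ($m\ge2$) at exponents $\le\mu_Y$ --- are here immediate, so this is just the degenerate, easiest case of the mechanism behind Theorem~\ref{theorem}, and I do not expect any real obstacle.
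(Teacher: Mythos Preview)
Your argument is correct and follows essentially the same route as the paper: both read off $n$ from $\lim_{t\to\infty}M_X(t)$ and then use the coefficient of the lowest non-constant term in the large-scale expansion to see that the shortest edge occurs with multiplicity $\binom{n}{2}$, forcing all edges equal. Your explicit computation of $M_{K_n}$ and the Neumann-series justification are extra detail, but the key observation is identical to the paper's one-line proof via the formal magnitude expansion.
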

This can be thought of as a discrete version of Proposition 3.3. of \cite{GGL} that a ball is determined by the magnitude function. 
Combining this proposition with the theorem, we can say that in the absence of maximum symmetry, moderate asymmetry is more convenient for identifying spaces. 

\begin{remark}\label{remark}\rm 
Let us introduce Roff-Yoshinaga's realization of the set of isometry classes of unordered $n$-point metric spaces (\cite{RY} Section 2). 
Let $N={n\choose2}$ as before and put 
\[\mathcal{L}_n=\left\{(d_{12},\dots,d_{n-1\,n})\in (\RR_{>0})^N\,\left|\,
\begin{array}{l}
\,d_{ij}+d_{jk}\ge d_{ik} \>\, \forall i,j,k\,(1\le i,j,k\le n), \\[0.5mm]
\mbox{where }\>d_{\la\la}=0,\,  d_{\la\mu}=d_{\mu\la}\>\,\forall\la,\mu
\end{array}
\right.\right\}.\]
The symmetric group $\frak{S}_n$ acts on $\mathcal{L}_n$ by $\sigma\cdot(d_{ij})=(d_{\si(i)\si(j)})$. 
The space of $n$-point metric spaces $\mathcal{M}et_n$ can be identified with $\mathcal{L}_n/\frak{S}_n$. 
We assume that it is equipped with the quotient topology. 

The metric space $X$ is determined by the magnitude function in the sense of Definition \ref{def_2} if and only if $M_X(t)$ determines a point in $\mathcal{M}et_n$. 

Suppose $\mathcal{L}_n$ is equipped with the Lebesgue measure and $\mathcal{M}et_n$ with the image measure. Then the set of rationally dependent $n$-point metric spaces is measure zero in $\mathcal{M}et_n$ since it is a union of countably many codimension one subspaces. 
In this sense, generic finite metric spaces are rationally independent. 
\end{remark}

\section{Proof of the Theorem}

The small scale asymptotics of the magnitude function (Subsection \ref{small-scale_asympt}) is used to prove (1) and (4) of the theorem (i.e. for the cases when n=3 and 4) (Subsections \ref{part1} and \ref{part4}), and the large scale asymptotics (Subsection \ref{large-scale_asympt}) is used to prove (2) of the theorem (i.e. for the general case satisfying rational independence) (Subsection \ref{part2}). 
The proof of Theorem (4) also requires a combinatorial argument. 
Some calculations were checked using Maple.
Note that we have only to prove (2) for $n\ge4$ and (3) for $n\ge5$. 

We first remark that in our cases the similarity matrix $Z_X(t)$ is invertible. 
When $n=3$ the similarity matrix $Z_X(t)$ is invertible for any $t$ $(t>0)$ (\cite{L13} Proposition 2.4.15).
When $n=4$ the similarity matrix $Z_X(t)$ is invertible for any $t$ $(t>0)$ (\cite{M13} Theorem 3.6 (4)). 
For any finite metric space $X$ the similarity matrix $Z_X(t)$ is invertible for all but finitely many $t>0$ (\cite{L13} Proposition 2.2.6 i). 
Therefore 
$z_X(q)$ $(0<q<1)$ is invertible for any sufficiently small $q$.

\bigskip
We introduce sets of subscript tuples that play important roles in combinatorial arguments. 
\begin{definition}\label{def_I} \rm 
Put 
\[
\mathcal{I}_{k}
=\displaystyle \left\{(i_0,i_1,\dots,i_k)\,|\,1\le i_0,\dots,i_k \le n, \, 
i_0\ne i_1\ne \dots \ne i_k \right\}  
\]
for $k\in\bar\NN$, where $\bar\NN=\NN\cup\{0\}$, and 
\begin{equation}\label{eq_def_I}
\begin{array}{rcl}
\mathcal{I}_\otwop&=&\displaystyle \left\{(i,j,k)\,|\,i\ne j\ne k, \,i<k\right\}, \\[1mm]
%

\mathcal{I}_\triangle&=&\{(i,j,k)\,|\,1\le i<j<k\le n\}, \\[2mm]
\mathcal{I}_\otp&=& \displaystyle \left\{(i,j,k,l)\,|\,1\le i,j,k,l \le n, \, 
i\ne j\ne k\ne l, \, i<l\right\}, \hspace{1cm}{}  \\[2mm]
\mathcal{I}_\sotp&=& \displaystyle \left\{(i,j,k,l)\,|\,1\le i,j,k,l \le n, \, 
i,j,k,l \mbox{ are mutually distinct, } \, i<l\right\}, \hspace{1cm}{}  \\[2mm]
\mathcal{I}_{\mbox{\rm \footnotesize disj}}&=&\displaystyle \left\{(i,j,k,l)\,|\,i<j,k<l,\{i,j\}\cap\{k,l\}=\emptyset, \, i<k \right\}, 
%
\end{array}
\notag
\end{equation}
where o.$2$-p., o.$3$-p., s.o.$3$-p. and disj stand for open $2$-path, open $3$-path, simple open $3$-path and disjoint respectively. 
When $n=4$, write $\mathcal{I}_{\mbox{\rm \footnotesize disj}}$ as $\mathcal{I}_{\mbox{\rm \footnotesize opp}}$. 
\end{definition}

\subsection{Asymptotic behavior at small scale}\label{small-scale_asympt} 
Let $\Delta_{X,ij}(t)$ be the $(i,j)$-cofactor of $Z_X(t)$. 
Put 
\[
M\!u_X(t)=\sum_{i,j}\Delta_{X,ij}(t), \quad M\!d_X(t)=\det Z_X(t), 
\] 
then $M_X(t)=M\!u_X(t)/M\!d_X(t)$.

Let $\nu_k=\nu_k(d_{ij})$ and $\de_k=\de_k(d_{ij})$ be the coefficients of series expansion of $M\!u_X(t)$ and $M\!d_X(t)$ respectively: $\displaystyle M\!u_X(t)=\sum_k\nu_k\,t^k, M\!d_X(t)=\sum_l\de_l\,t^l$. 
Roff and Yoshinaga (\cite{RY} Proof of Theorem 2.3) recently proved\footnote{This can also be verified by direct computation when $n=3,4$.} 
\begin{equation}\label{RY_thm23}
\nu_0=\dots =\nu_{n-2}=0,\> \de_0=\dots =\de_{n-2}=0,\> \nu_{n-1}=\de_{n-1}.
\end{equation}
Put $\displaystyle M_\la=\lim_{t\to 0^+}\frac{d^\la}{dt^\la}M_X(t)$ for $\la\ge0$. 
The identities in \eqref{RY_thm23} implies that if $\de_{n-1}\ne0$ then $\displaystyle M_1=\lim_{t\to0^+}M_X'(t)$  is given by 
\begin{equation}\label{M1}
M_1=\frac{\nu_n-\de_n}{\de_{n-1}}.
\end{equation}
%
%

The asymptotic behavior of the magnitude function at small scale will be used when $n=3$ and $4$. 
We assume $n=3$ or $4$ in what follows in this Subsection. 
\begin{proposition}\label{pos_Mpd}
\begin{enumerate}
\item $\de_2$ is positive for any $3$-point space. 
\item $\de_3$ is non-negative for any $4$-point space and positive if strict inequalities hold in all triangle inequalities. 
\item $\de_3$ is positive for any $4$-point metric subspace of the Euclidean space with the standard metric. 
\end{enumerate}
\end{proposition}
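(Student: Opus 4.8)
The plan is to compute the lowest non‑vanishing coefficient $\delta_{n-1}$ of $M\!d_X(t)=\det Z_X(t)$ explicitly for $n=3$ and $n=4$ and read off its sign from the triangle inequalities. Write $Z_X(t)=J-tD+O(t^2)$, where $J$ is the all‑ones $n\times n$ matrix and $D=(d_{ij})$ is the distance matrix. Since $J$ has rank $1$, $\det Z_X(t)$ vanishes to order exactly $n-1$, and replacing one factor $tD$ by a higher‑order term of $Z_X(t)$ only raises the order; hence $\delta_{n-1}$ is the coefficient of $t^{n-1}$ in $\det(J-tD)$. By the matrix‑determinant lemma (valid for invertible $D$, hence as a polynomial identity in the $d_{ij}$) this coefficient equals $(-1)^{n-1}$ times the sum of all cofactors of $D$, equivalently $(-1)^{n}\det\!\begin{pmatrix}0&\mathbf 1^{T}\\ \mathbf 1&D\end{pmatrix}$. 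For $n=3$ this is $\delta_2=2(d_{12}d_{13}+d_{13}d_{23}+d_{12}d_{23})-(d_{12}^2+d_{13}^2+d_{23}^2)$, and part (1) follows at once: multiplying each of $d_{12}\le d_{13}+d_{23}$, $d_{13}\le d_{12}+d_{23}$, $d_{23}\le d_{12}+d_{13}$ by its (positive) left‑hand side and summing gives $d_{12}^2+d_{13}^2+d_{23}^2\le 2(d_{12}d_{13}+d_{13}d_{23}+d_{12}d_{23})$, i.e.\ $\delta_2\ge 0$; equality would make all three inequalities equalities, forcing $d_{12}=d_{13}=d_{23}=0$, which is impossible for distinct points, so $\delta_2>0$.

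For $n=4$ I would clear the first row and column of the $5\times5$ bordered matrix by elementary row and column operations and rewrite the outcome as $\delta_3=8\det G$, where $G=\bigl(\tfrac12(d_{1i}+d_{1j}-d_{ij})\bigr)_{i,j\in\{2,3,4\}}$ is the Gram matrix of the Gromov products based at $P_1$. Every metric on at most four points is $\ell_1$‑embeddable (the metric cone coincides with the cut cone for $n\le4$), hence of negative type, and the Gromov‑product Gram matrix of a negative‑type space is positive semidefinite; so $\delta_3=8\det G\ge 0$, which is the first half of part (2), and $\delta_3>0$ exactly when $G$ is non‑singular. Now each $2\times2$ principal minor of $G$ equals $\tfrac14$ times the quantity $\delta_2$ of the corresponding three‑point subspace, so it is positive by part (1); hence $\operatorname{rank}G\ge 2$ always, and it remains only to exclude $\operatorname{rank}G=2$ when all triangle inequalities are strict. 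In that case the isometric embedding of $(X,\sqrt d)$ into a Hilbert space (which exists by negative type) would be realized by four points $Q_1,\dots,Q_4\in\mathbb R^2$ with $d_{ij}=\|Q_i-Q_j\|^2$, so each strict inequality $d_{ik}<d_{ij}+d_{jk}$ would force, by the law of cosines, the angle of $\triangle Q_iQ_jQ_k$ at $Q_j$ to be acute, making all four triangles on $Q_1,\dots,Q_4$ acute — impossible, since for four points in the plane either the interior angles of the convex quadrilateral they form, or the three angles around the one lying inside the triangle of the others, sum to $360^\circ$. Hence $G\succ 0$ and $\delta_3>0$, proving (2).

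For part (3) the extra input is that a Euclidean metric on distinct points is of \emph{strict} negative type: the quadratic form $c\mapsto\sum_{i,j}c_ic_j d_{ij}$ is negative definite on $\{c:\sum_i c_i=0\}$. This follows from the representation $\|x-y\|=c_m\int_{\mathbb R^m}\|\xi\|^{-m-1}\bigl(1-\cos(\xi\cdot(x-y))\bigr)\,d\xi$, which makes the form equal to $-c_m\int_{\mathbb R^m}\|\xi\|^{-m-1}\bigl|\sum_j c_j e^{i\xi\cdot P_j}\bigr|^2 d\xi$ on $\{\sum_i c_i=0\}$, and this vanishes only when $\sum_j c_j e^{i\xi\cdot P_j}\equiv 0$, i.e.\ only for $c=0$, since the characters $\xi\mapsto e^{i\xi\cdot P_j}$ at distinct frequencies $P_j$ are linearly independent. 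Strict negative type forces $G$ to be positive definite, so $\delta_3=8\det G>0$ with no hypothesis on the triangles; this proves (3).

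Once the reductions of the first paragraph are in hand, the crux is the identification $\delta_3=8\det G$ with the Gromov‑product Gram matrix; after that the three positivity statements follow from the standard theory of metrics of negative type together with the elementary planar fact that no four points of $\mathbb R^2$ have all four of their triples acute. The point most in need of care is the claim that the $O(t^2)$ part of $Z_X(t)$ does not affect $\delta_{n-1}$ (the rank‑one remark above); the explicit $n=3$ and $n=4$ expansions can be, and were, verified with Maple.
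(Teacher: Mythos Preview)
Your argument is correct and takes a genuinely different route from the paper's. The paper proceeds by direct symbolic factorization: it rewrites
\[
\delta_2=\tfrac12\sum_{\{i,j,k\}=\{1,2,3\}}(d_{jk}+d_{ik}-d_{ij})(d_{ik}+d_{ij}-d_{jk})
\]
and
\[
\delta_3=\tfrac16\sum_{\{i,j,k,l\}=\{1,2,3,4\}}(d_{ik}+d_{jk}-d_{ij})(d_{il}+d_{kl}-d_{ik})(d_{ij}+d_{jl}-d_{il}),
\]
i.e.\ as sums of products of ``triangle defects''; non-negativity and strict positivity under strict triangle inequalities are then immediate, and part~(3) is handled by a short case analysis on which (if any) triangle collapses in Euclidean space.

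Your approach instead identifies $\delta_{n-1}$ with the bordered (Cayley--Menger type) determinant $(-1)^n\det\bigl(\begin{smallmatrix}0&\mathbf 1^T\\ \mathbf 1&D\end{smallmatrix}\bigr)$ and, for $n=4$, with $8\det G$ for the Gromov-product Gram matrix based at $P_1$; the sign questions then become questions about negative type. This is more structural and imports real machinery---the coincidence of the metric cone and the cut cone for $n\le4$, Schoenberg's embedding theorem, and strict negative type of Euclidean space---while the planar ``no four points with all four triangles acute'' contradiction is a neat way to finish~(2). The trade-off is that the paper's bare-hands factorizations are entirely self-contained and also furnish explicit polynomial expressions for $\delta_2,\delta_3$ that are reused later in the small-scale analysis, whereas your route, though conceptually cleaner and more suggestive of generalization, leans on several non-trivial external results that would need citation or proof in a self-contained exposition.
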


\begin{proof} 
(1) Direct calculation shows 
\begin{equation}\label{cd2}
\de_2=\frac12\sum_{\{i,j,k\}=\{1,2,3\}}(d_{jk}+d_{ik}-d_{ij})(d_{ik}+d_{ij}-d_{jk}).
\end{equation}
Among the three terms of the form $d_{jk}+d_{ik}-d_{ij}$ $(i<j)$ at most only one can be $0$. Hence $\de_2>0$. 

\smallskip
(2) Direct calculation shows that $\de_3$ is given by 
\[
-2\sum_{\mathcal{I}_{\mbox{\rm \scriptsize disj}}}\left(d_{ij}^{\,2}d_{kl}+d_{ij}d_{kl}^{\,2}\right)-2\sum_{\mathcal{I}_\triangle}d_{ij}d_{jk}d_{ik}+2\sum_{\mathcal{I}_\scsotp}d_{ij}d_{jk}d_{kl},
\]
and that it is equal to 
\begin{equation}\label{cd3}
\frac16\sum_{\{i,j,k,l\}=\{1,2,3,4\}}(d_{ik}+d_{jk}-d_{ij})(d_{il}+d_{kl}-d_{ik})(d_{ij}+d_{jl}-d_{il}). 
\end{equation}

(3) The above statement implies that $\de_3$ is positive if none of the triangles collapses. 
Suppose that one triangle, say $\triangle P_1P_2P_3$ collapses. 
If $P_4$ is not on the line $L$ through $P_1, P_2$ and $P_3$, then 
\[(d_{24}+d_{12}-d_{14})(d_{34}+d_{23}-d_{24})(d_{14}+d_{13}-d_{34})>0,\]
which implies $\de_3>0$. 
If $P_4$ is on the line $L$, then we may assume without loss of generality that $P_1, P_2, P_3, P_4$ lie on $L$ in this order. Then $\de_3=8d_{12}d_{23}d_{34}>0$. 
\end{proof}

\begin{remark}\rm 
(1) Proposition \ref{pos_Mpd} implies that any $3$-point set and $4$-point set in the Euclidean space is {\sl generic} in the sense of Roff-Yoshinga (Theorem 2.3 of \cite{RY}). 
It gives an alternative direct proof that any $3$-point set and $4$-point set in the Euclidean space has one-point property. 

\smallskip
(2) The strict triangle inequality condition in (2) of Proposition \ref{pos_Mpd} is satisfied if either the strict virtual triangle inequality condition or the rational independence 
condition (ri) is satisfied. 
Therefore, for $4$-point sets, our condition is stronger (i.e. more restrictive) than Roff-Yoshinga's genericity condition. 

\smallskip
(3) There is an example of a $4$-point metric space that makes $\de_3$ zero, for example, a square graph with graph metric. 

\smallskip
(4) A similar equality like \eqref{cd2} or \eqref{cd3} does not hold when $n=5$. The sign of $\de_4$ may change. 
In fact, let $X_{3,2;\ell}$ $(0<\ell\le2)$ be the space obtained by connecting the vertices on the two-point side of the complete bipartite graph $K_{3,2}$ with an edge of length $\ell$ (Figure \ref{K32_l}). 
Then $\de_4=-4\ell(3\ell-4)$. 
Note that when $\de_4$ is negative ($4/3<\de\le2$) the magnitude function behaves like that of $K_{3,2}$ (Figure \ref{mag_K32_l_3over2}). 

Remark that this space cannot be isometrically embedded in Euclidean space. 
\begin{figure}[htbp]
\begin{center}
\begin{minipage}{.35\linewidth}
\begin{center}
\includegraphics[width=0.6\linewidth]{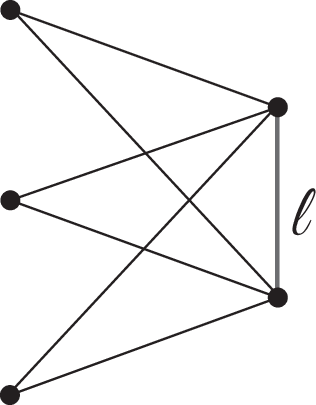}
\caption{$X_{3,2;\ell}$, the complete bipartite graph $K_{3,2}$ attached an edge with length $\ell$}
\label{K32_l}
\end{center}
\end{minipage}
\hskip 0.2cm
\begin{minipage}{.6\linewidth}
\begin{center}
\includegraphics[width=.8\linewidth]{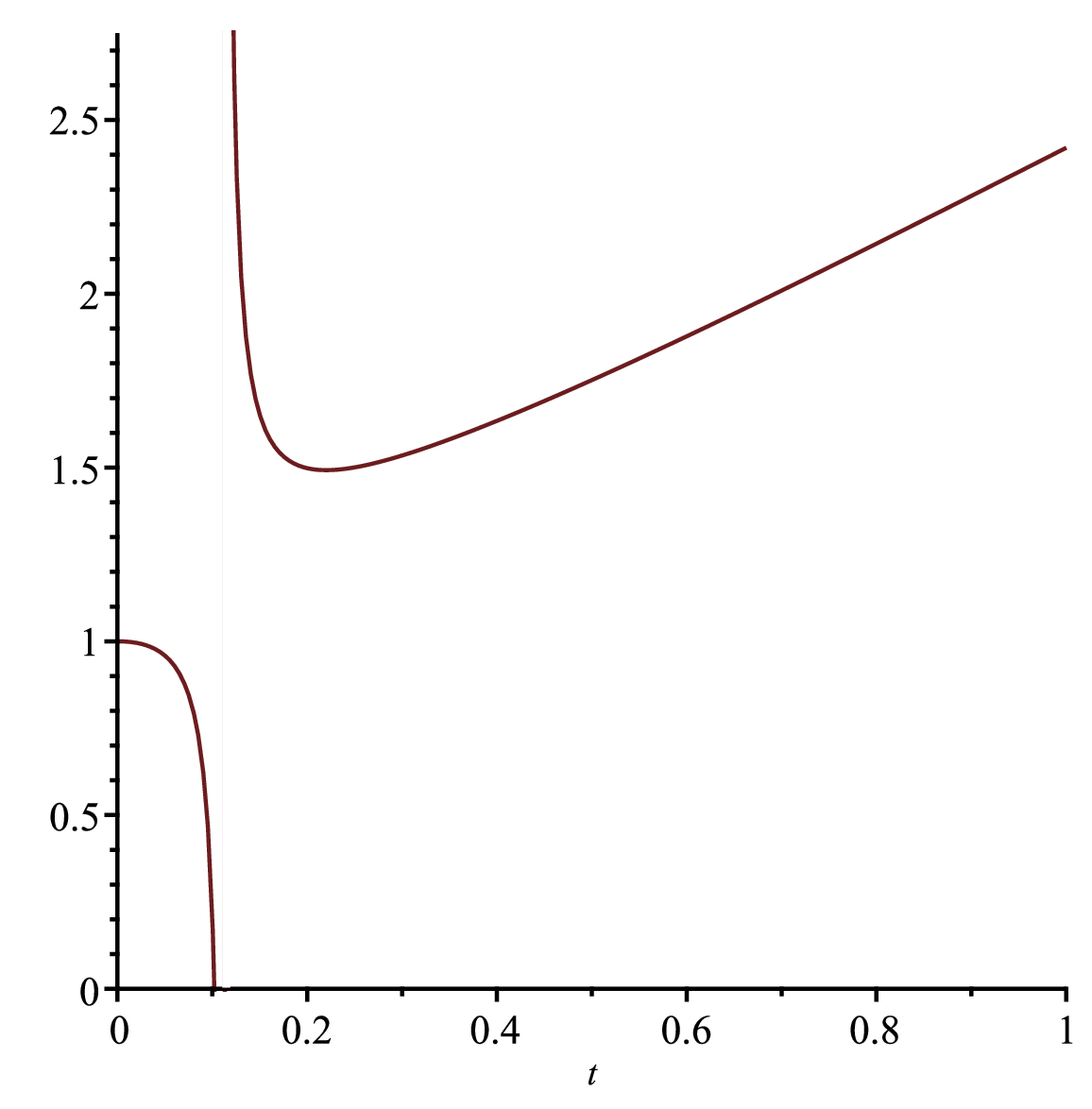}
\caption{The magnitude function of $X_{3,2;\ell}$ when $\ell=3/2$}
\label{mag_K32_l_3over2}
\end{center}
\end{minipage}
\end{center}
\end{figure}
\end{remark}

\subsection{Proof of Part (1) of Theorem \ref{theorem}}\label{part1} 
The proof of this case is carried out by a different way from the other cases. 
We only need the asymptotic behavior of the magnitude function at small scale. 

Assume $n=3$. Put $a=d_{12}, b=d_{13}$ and $c=d_{23}$. 
\begin{lemma}\label{lemma_Mu_Md}
For $\la=1,2$ and $3$, $M_\la=\lim_{t\to 0^+}\frac{d^\la}{dt^\la}M_X(t)$ are given by 
\[
\begin{array}{rcl}
M_1&=&\displaystyle \frac{2abc}{-a^2-b^2-c^2+2ab+2bc+2ca}, \\[6mm]
M_2&=&\displaystyle \frac{2abc(b+c-a)(c+a-b)(a+b-c)}{{(-a^2-b^2-c^2+2ab+2bc+2ca)}^2}, \\[6mm]
M_3&=&\displaystyle \frac{abc\,P(a,b,c)}{{(-a^2-b^2-c^2+2ab+2bc+2ca)}^3},
\end{array}
\]
where 
\[\begin{array}{rcl}
P(a,b,c)&=&\displaystyle a^6+\dots+a^5b+\dots-13a^4b^2-\dots+9a^4bc+\dots+22a^3b^3+\dots \\[2mm]
&&\displaystyle -10a^3b^2c-\dots+30a^2b^2c^2.
\end{array}
\]
\end{lemma}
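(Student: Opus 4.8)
The plan is a direct computation. Write $\alpha=e^{-ta}$, $\beta=e^{-tb}$, $\gamma=e^{-tc}$, so that $Z_X(t)$ is the symmetric $3\times 3$ matrix with $1$'s on the diagonal and $\alpha,\beta,\gamma$ off the diagonal. First I would record the closed forms
\[
M\!d_X(t)=1-\alpha^2-\beta^2-\gamma^2+2\alpha\beta\gamma,
\]
\[
M\!u_X(t)=3-(\alpha^2+\beta^2+\gamma^2)+2(\alpha\beta+\beta\gamma+\gamma\alpha)-2(\alpha+\beta+\gamma),
\]
the latter being the sum of the nine $2\times 2$ cofactors of $Z_X(t)$. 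Substituting $\alpha=\sum_{k\ge 0}(-ta)^k/k!$ (and similarly for $\beta,\gamma$) and collecting powers of $t$, one checks directly --- in accordance with \eqref{RY_thm23} --- that $M\!d_X(t)=\de_2t^2+\de_3t^3+\cdots$ and $M\!u_X(t)=\nu_2t^2+\nu_3t^3+\cdots$ with $\nu_2=\de_2=-a^2-b^2-c^2+2ab+2bc+2ca$. Since $\de_2>0$ by Proposition~\ref{pos_Mpd}(1), $M\!d_X$ has a zero of order exactly $2$ at $t=0$; hence $M_X(t)=M\!u_X(t)/M\!d_X(t)$ extends to an analytic function near $t=0$ with $M_X(0)=1$, and each $M_\la$ equals $\la!$ times the coefficient of $t^\la$ in its Taylor expansion.

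Next I would extract $M_1,M_2,M_3$ from the identity $M_X(t)\,M\!d_X(t)=M\!u_X(t)$. Differentiating $\la$ times, evaluating at $t=0$, and using $M\!d_X(0)=M\!d_X'(0)=0$, the Leibniz rule yields the triangular system
\[
\la!\,\nu_\la=\sum_{j=0}^{\la-2}\binom{\la}{j}(\la-j)!\,\de_{\la-j}\,M_j,\qquad M_0=1,
\]
which one solves successively: $M_1=(\nu_3-\de_3)/\de_2$ (this is \eqref{M1} for $n=3$); then $M_2=\bigl(2(\nu_4-\de_4)\de_2-2(\nu_3-\de_3)\de_3\bigr)/\de_2^2$; then $M_3$ as a rational function of $\de_2,\dots,\de_5$ and $\nu_3,\dots,\nu_5$ over $\de_2^3$. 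The coefficients needed here are read off from the expansions above; for instance $\de_3=\tfrac43(a^3+b^3+c^3)-\tfrac13(a+b+c)^3$, and likewise for $\de_4,\de_5,\nu_3,\nu_4,\nu_5$.

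The final step is simplification. One finds $\nu_3-\de_3=2abc$ and $\nu_4-\de_4=-abc(a+b+c)$, together with the identity $(a+b+c)\de_2+2\de_3=-(b+c-a)(c+a-b)(a+b-c)$, and the stated forms of $M_1$ and $M_2$ then drop out; the numerator of $M_3$ similarly collapses into $abc$ times the degree-$6$ symmetric polynomial $P(a,b,c)$. The only conceptual point needing care is the legitimacy of the power-series division, which is precisely the condition $\de_2\neq 0$; everything else is routine symmetric-polynomial manipulation. I expect the only genuine labor to be the last piece of bookkeeping --- expanding and collapsing the degree-$9$ numerator of $M_3$ --- which is exactly where I would fall back on a computer-algebra check, as the paper notes Maple was used.
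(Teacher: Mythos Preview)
Your proposal is correct and follows essentially the same route as the paper: expand $M\!u_X$ and $M\!d_X$ as power series in $t$, use $\nu_2=\de_2\ne 0$ to justify the division, and extract $M_1,M_2,M_3$ from the triangular system coming from $M_X\cdot M\!d_X=M\!u_X$. The paper organizes the intermediate coefficients via the power sums $\si_\mu=a^\mu+b^\mu+c^\mu$ (recording in particular $\nu_5-\de_5=\tfrac{abc}{4}(\si_1^{\,2}+\tfrac{\si_2}{3})$ for the $M_3$ step) rather than your direct expansion of $e^{-ta}$, but the computations are the same, and your formulas for $M_1$ and $M_2$ match the paper's exactly.
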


Since the denominators of $M_\la$ are $\de_{2}^{\la}$ they are positive by Proposition \ref{pos_Mpd}. 

\begin{proof}
If we put\footnote{$\si_\mu$'s make the expressions simpler than the elementary symmetric polynomials of $a,b$ and $c$ do. } $\sigma_\mu=a^\mu+b^\mu+c^\mu$ $(\mu\in\NN)$, we have 
\[\begin{array}{c}
\nu_0=\nu_1=\de_0=\de_1=0,\> \nu_2=\de_2=\si_1^{\,2}-2\si_2,\\[3mm]
\displaystyle \nu_3=-\si_1\si_2+2\si_3, \, \nu_4=\frac{\si_1\si_3}3+\frac{\si_2^{\,2}}4-\frac{7\si_4}6, \, \nu_5=-\frac{\si_1\si_4}{12}-\frac{\si_2\si_3}6+\frac{\si_5}2,\\[3mm]
\displaystyle \nu_3-\de_3=2abc, \> \nu_4-\de_4=-abc\,\si_1, \> \nu_5-\de_5=\frac{abc}4\left(\si_1^{\,2}+\frac{\si_2}3\right).
\end{array}\]
Now $M_1$ is obtained from \eqref{M1} and $M_2$ from 
\[
M_2=2\,\frac{(\nu_4-\de_4)\de_2-(\nu_3-\de_3)\de_3}{\de_{2}^{\,2}}.
\]
$M_3$ is obtained in the same way, although the calculation is more complicated, so we omit the details. 
\end{proof}

\begin{lemma}\label{abc<-M1M2M3}
The edge lengths $a,b$ and $c$ can be obtained from $M_1,M_2$ and $M_3$. 
\end{lemma}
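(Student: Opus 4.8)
The plan is to treat $M_1, M_2, M_3$ as three equations in the three unknowns $a,b,c$ and solve. The key observation from Lemma \ref{lemma_Mu_Md} is that all three quantities share the structure $M_\la = abc \cdot (\text{symmetric polynomial}) / \de_2^{\,\la}$, where $\de_2 = -a^2-b^2-c^2+2ab+2bc+2ca$ is itself symmetric. So every $M_\la$ is a symmetric function of $a,b,c$, and recovering $a,b,c$ (as an unordered multiset, which is all Definition \ref{def_2} requires) amounts to recovering the elementary symmetric polynomials $e_1 = a+b+c$, $e_2 = ab+bc+ca$, $e_3 = abc$, or equivalently the power sums $\si_1, \si_2, \si_3$ — after which $a,b,c$ are the roots of the cubic $x^3 - e_1 x^2 + e_2 x - e_3$.

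First I would express everything in terms of $\si_1, \si_2, \si_3$ (note $\de_2 = \si_1^2 - 2\si_2$, and $abc = e_3$, which is a polynomial in $\si_1,\si_2,\si_3$ via Newton's identities: $e_3 = \tfrac16(\si_1^3 - 3\si_1\si_2 + 2\si_3)$). The numerator of $M_2$ factors as $abc(b+c-a)(c+a-b)(a+b-c)$; the product of the three linear factors equals $\de_2 \cdot e_1 - \text{(something)}$ — more directly it is a symmetric degree-$3$ polynomial, namely $-e_1^3 + 4e_1 e_2 - 8e_3$, so $M_2 = e_3(-e_1^3+4e_1e_2-8e_3)/\de_2^2$. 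The three relations $M_1 \de_2 = 2e_3$, $M_2 \de_2^2 = e_3(-e_1^3+4e_1e_2-8e_3)$, and $M_3 \de_2^3 = e_3 P$ then become polynomial relations among $e_1, e_2, e_3$.

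The cleanest route is then algebraic elimination: from $M_1$ and $M_2$ one gets $M_2/M_1 = (-e_1^3+4e_1e_2-8e_3)/(2\de_2)$, and combined with $M_1 \de_2 = 2e_3$ this gives two polynomial constraints; bringing in $M_3$ gives a third. I would verify — this is where a Gröbner basis or resultant computation in Maple (which the paper already invokes) does the work — that this system has a unique solution $(e_1,e_2,e_3)$ in the relevant region (positive reals satisfying the triangle inequalities, equivalently $\de_2 > 0$ and all $b+c-a$ etc. positive). Concretely, one can try to solve explicitly: $M_1$ gives $e_3$ in terms of $\de_2$; one more relation pins down $\de_2$ (hence $\si_2$ once we also know $\si_1$), and the remaining relation pins down $\si_1$. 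Alternatively, and perhaps more robustly, observe $M_2/M_1^2 \cdot \de_2^0$-type ratios are homogeneous of degree $0$ in $(a,b,c)$ while $M_1$ carries the scale, so one first recovers the shape (ratios $a:b:c$) from scale-invariant combinations and then the overall scale from $M_1$.

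The main obstacle I expect is injectivity: showing the map $(a,b,c) \mapsto (M_1,M_2,M_3)$ is injective on the triangle-inequality cone modulo permutation, i.e. that the polynomial system genuinely has a unique admissible solution rather than several. Degenerate or symmetric configurations (isosceles, collapsed triangles where some $b+c-a \to 0$) need to be checked to confirm no spurious branches sneak in, and one must confirm the Jacobian is nonzero / the elimination ideal is zero-dimensional with a single real root in the cone. Once uniqueness of $(e_1,e_2,e_3)$ is established, recovering $\{a,b,c\}$ as the root multiset of the explicit cubic is immediate, and since for $n=3$ the combinatorial data is vacuous (any assignment of three lengths to the three edges of a triangle is realizable and all are isometric), this completes the proof that $X$ is determined by the magnitude function.
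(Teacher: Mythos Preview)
Your framework is the same as the paper's: both reduce the problem to recovering the elementary symmetric polynomials of the edge lengths from $M_1,M_2,M_3$, after which $\{a,b,c\}$ are the roots of a cubic. The difference is that the paper actually carries the inversion through explicitly, whereas you stop at ``Gr\"obner basis / Jacobian check'' and flag injectivity as an open obstacle. That obstacle is not real: had you pushed your own computation one step further you would have found it dissolves. From your relations $M_1\de_2=2e_3$ and $M_2\de_2^2=2e_3(-e_1^3+4e_1e_2-8e_3)$ (you dropped the factor $2$ in the second), dividing and using $4e_1e_2-e_1^3=e_1\de_2$ gives immediately
\[
e_1 \;=\; a+b+c \;=\; 4M_1+\frac{M_2}{M_1},
\]
which is exactly the paper's formula for $s_1$. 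One then uses $M_3$ to pin down $\de_2$ (equivalently $e_2$), and $e_3=M_1\de_2/2$ finishes it. So no resultants, no Jacobian argument, no case analysis of degenerate triangles is needed.

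The paper streamlines this by first passing to $x=b+c-a,\;y=c+a-b,\;z=a+b-c$ and their elementary symmetric polynomials $s_1,s_2,s_3$. This is a genuinely helpful trick because $\de_2=s_2$ and $(b+c-a)(c+a-b)(a+b-c)=s_3$, so $M_1=(s_1s_2-s_3)/4s_2$ and $M_2=(s_1s_2-s_3)s_3/4s_2^2$ are visibly simple rational functions, and the explicit inverse formulas \eqref{s123<->M123} for $s_1,s_2,s_3$ in terms of $M_1,M_2,M_3$ drop out by elementary algebra. Your $e_i$ variables work too, but the $s_i$ substitution is what turns a computation you deferred to Maple into three lines.
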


\begin{proof}
Let $s_1, s_2$ and $s_3$ be elementary symmetric polynomials of 
\begin{equation}\label{xyz<->abc}
x=b+c-a, \> y=c+a-b, \> z=a+b-c \,; \notag
\end{equation}
\begin{equation}\label{s123<->xyz}
\left\{\begin{array}{rcl}
s_1&=&x+y+z,\\[1mm]
s_2&=&xy+yz+zx, \\[1mm]
s_3&=&xyz.
\end{array}\right. \notag
\end{equation}
Since $x,y,z\ge0$ and at most one of $x,y$ and $z$ can be $0$, $s_1,s_2>0$ and $s_3\ge0$. 

Since $M_1,M_2$ and $M_3$ are symmetric in $a,b$ and $c$, and the elementary symmetric polynomials of $a, b$ and $c$ can be expressed by $s_1, s_2$ and $s_3$ as 
\begin{equation}\label{abcs1s2s3}
\left\{\begin{array}{rcl}
a+b+c&=&s_1, \\[2mm]
ab+bc+ca&=&\displaystyle \frac{s_1^{\,2}+s_2}4, \\[4mm]
abc&=&\displaystyle \frac{s_1s_2-s_3}8,
\end{array}\right.
\end{equation}
$M_1,M_2$ and $M_3$ can be expressed by $s_1, s_2$ and $s_3$; 
\[
\begin{array}{rcl}
M_1&=&\displaystyle \frac{s_1s_2-s_3}{4s_2},\\[4mm]
M_2&=&\displaystyle \frac{(s_1s_2-s_3)s_3}{4s_2^{\,2}},\\[4mm]
M_3&=&\displaystyle -\frac{(s_1s_2-s_3)\left(s_1^{\,2}s_2^{\,2}+4s_1s_2s_3-3s_2^{\,3}-12s_3^{\,2}\right)}{32s_2^{\,3}}.
\end{array}
\]
Remark that $M_1>0$ since $s_1s_2-s_3=8abc>0$ and $s_2>0$. 
Solving the above equations for $s_1, s_2$ and $s_3$, we obtain 
\begin{equation}\label{s123<->M123}
\begin{array}{rcl}
s_1&=&\displaystyle \frac{4M_1^{\,2}+M_2}{M_1}, \\[6mm]
s_2&=&\displaystyle \frac{16M_1^{\,4}+24M_1^{\,2}M_2+8M_1M_3-7M_2^{\,2}}{3M_1^{\,2}}, \\[6mm]
s_3&=&\displaystyle \frac{M_2\left(16M_1^{\,4}+24M_1^{\,2}M_2+8M_1M_3-7M_2^{\,2}\right)}{3M_1^{\,3}}.
\end{array}
\end{equation}
The edge lengths $a,b$ and $c$ are obtained from $s_1, s_2$ and $s_3$ by \eqref{abcs1s2s3}, and hence from $M_1,M_2$ and $M_3$ by \eqref{s123<->M123}, which completes the proof. 
\end{proof}

\begin{corollary}
A $3$-point metric space $X$ is determined by  $\displaystyle \lim_{t\to 0^+}M^{(\la)}_X(t)$ $(\la=1,2,3)$. 
\end{corollary}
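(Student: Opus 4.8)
The plan is to simply assemble the two preceding lemmas. By Lemma~\ref{lemma_Mu_Md}, the hypothesis $\de_2>0$ (Proposition~\ref{pos_Mpd}(1)) guarantees that the three limits $M_\la=\lim_{t\to 0^+}M_X^{(\la)}(t)$ for $\la=1,2,3$ exist and are given by explicit rational functions of the edge lengths $a=d_{12}$, $b=d_{13}$, $c=d_{23}$. By Lemma~\ref{abc<-M1M2M3}, these formulas are invertible on the relevant domain: writing $s_1,s_2,s_3$ for the elementary symmetric functions of $x=b+c-a$, $y=c+a-b$, $z=a+b-c$, one recovers $(s_1,s_2,s_3)$ from $(M_1,M_2,M_3)$ via \eqref{s123<->M123} and then $a,b,c$ from \eqref{abcs1s2s3}. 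Solving a cubic with prescribed elementary symmetric functions recovers its roots as an unordered collection, so from $M_1,M_2,M_3$ we obtain the multiset $[a,b,c]$ of edge lengths.

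Next I would observe that for $n=3$ the combinatorial data contributes nothing beyond the multiset: any bijection from the three pairs $\{1,2\},\{1,3\},\{2,3\}$ onto three prescribed lengths is realized by a relabeling of the three points, so all such choices yield isometric metric spaces. Hence, in the sense of Definition~\ref{def_2}, the space $X$ is already determined once the multiset $[a,b,c]$ is known. Since $M_1,M_2,M_3$ are read off directly from the germ of $M_X(t)$ at $t=0^+$, this establishes the corollary, and in particular it proves Part~(1) of Theorem~\ref{theorem}.

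There is essentially no further obstacle at the level of the corollary: the real content has already been discharged inside Lemma~\ref{abc<-M1M2M3}, namely that the map $(s_1,s_2,s_3)\mapsto(M_1,M_2,M_3)$ is invertible on the region $s_1,s_2>0$, $s_3\ge 0$ (equivalently $M_1>0$), and that the recovered $s_i$ indeed come from a genuine triangle. The corollary is then a one-line consequence of Lemmas~\ref{lemma_Mu_Md} and~\ref{abc<-M1M2M3} together with the triviality of $3$-point combinatorial data.
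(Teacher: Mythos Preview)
Your proposal is correct and matches the paper's approach exactly: the corollary is stated immediately after Lemma~\ref{abc<-M1M2M3} with no separate proof, precisely because it is the one-line assembly of Lemmas~\ref{lemma_Mu_Md} and~\ref{abc<-M1M2M3} together with the observation that for three points the combinatorial data is vacuous up to relabeling.
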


\subsection{Asymptotic behavior at large scale when $n\ge4$}\label{large-scale_asympt} 
%
In this subsection we investigate the asymptotic behavior of the magnitude function $M_X(t)$ at large scale ($t\to+\infty$) as preparation for the proof when $n\ge4$. For this purpose it seems that the use of the formal magnitude $m_X(q)$ $(q=e^{-t})$ would make the description easier to read. 
Assume $n\ge4$ in what follows. 

(i) First we show that a finite rationally independent metric space is determined by the triples of lengths of three consecutive edges that form triangles or open $3$-paths.

Let $X=\{P_1,\dots,P_n\}$. 
Let $\widetilde{\mathcal{S}}_\triangle$ (or $\widetilde{\mathcal{S}}_\otp$) be the multiset of the multisets of lengths of edges forming triangles (or respectively, open $3$-paths): 
\begin{equation}
\begin{array}{rcl}
\widetilde{\mathcal{S}}_\triangle&=&\displaystyle \left[\,[d_{ij},d_{jk},d_{ki}]\,|\,(i,j,k)\in \mathcal{I}_\triangle \right], \notag \\[1mm]
\widetilde{\mathcal{S}}_\otp&=&\displaystyle \left[\,[d_{ij},d_{jk},d_{kl}]\,|\,(i,j,k,l)\in \mathcal{I}_\otp \right].
\notag 
\end{array} 
\end{equation}
Note that $\widetilde{\mathcal{S}}_\triangle$ and $\widetilde{\mathcal{S}}_\otp$ have no information about the subscripts of $d_{\ast\ast}$; in other words, even if we know the triplet of edge lengths, we do not know their vertices.
\begin{lemma}\label{lemma_determine}
A finite $3$-generic metric space is determined by $\widetilde{\mathcal{S}}_\triangle$ and $\widetilde{\mathcal{S}}_\otp$. 
\end{lemma}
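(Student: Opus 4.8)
The plan is to reconstruct the metric space from the combinatorial structure of triangles and open 3-paths, using $3$-genericity to ensure that multisets of three edge lengths behave like "labels" that identify edges and their incidences up to isometry. The key point is that the vertex set, together with the knowledge of which unordered pairs of vertices are at which distance, is all we want to recover (i.e., a point of $\mathcal{M}et_n$), so it suffices to reconstruct the full distance matrix up to a permutation of $\{1,\dots,n\}$.

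First I would recover the multiset of all edge lengths: each edge of $X$ lies in some triangle or open $3$-path (since $n\ge4$, in fact $n\ge3$ suffices for triangles once $n\ge3$), so $\widetilde{\mathcal{S}}_\triangle$ already contains, for each triple of mutually incident edges, the multiset of their lengths; summing multiplicities appropriately (each edge lies in exactly $n-2$ triangles) recovers the multiset $[d_{ij}]$ of all $N={n\choose2}$ edge lengths. Next I would set up the reconstruction as a gluing problem: choose any triangle $T=[a,b,c]$ appearing in $\widetilde{\mathcal{S}}_\triangle$, realize it on three labeled vertices $P_1,P_2,P_3$ with $d_{12}=a$, $d_{13}=b$, $d_{23}=c$ (using $3$-genericity to know unambiguously which pair gets which length whenever $a,b,c$ are distinct; when some coincide the ambiguity is exactly a symmetry of the configuration), and then add the remaining vertices one at a time. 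When adding $P_m$, its three distances to a fixed reference triangle $P_i,P_j,P_k$ already placed are constrained: the triple $(d_{im},d_{jm},d_{mk})$ — reading $P_i$–$P_m$–$P_j$ and $P_j$–$P_m$–$P_k$ as open $3$-paths, or $P_iP_jP_m$ as a triangle — must match entries of $\widetilde{\mathcal{S}}_\otp$ and $\widetilde{\mathcal{S}}_\triangle$. Here is where $3$-genericity does the real work: because no two sums of $\le 3$ edge lengths coincide, the multiset $[d_{im},d_{jm}]$ occurring in the open $3$-path through $P_m$ between two \emph{already-labeled} vertices $P_i,P_j$ pins down the unordered pair of lengths $\{d_{im},d_{jm}\}$, and then a second open $3$-path $P_i$–$P_m$–$P_k$ (sharing the vertex $P_i$) forces which of the two is $d_{im}$; iterating over $k$ resolves all distances from $P_m$ to the already-placed vertices. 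One then checks consistency — that the distances so assigned are mutually compatible across different reference triangles — which again follows from $3$-genericity, since any inconsistency would manifest as two distinct expressions as sums of $\le 3$ lengths being equal.

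The main obstacle I anticipate is the bookkeeping in the inductive step when edge lengths coincide, so that some triples in $\widetilde{\mathcal{S}}_\triangle$ or $\widetilde{\mathcal{S}}_\otp$ are not "generic-looking" even though the space is $3$-generic (genericity forbids coincidences among \emph{sums}, not among individual lengths). In that case the local labeling of a triangle or the local assignment of $\{d_{im},d_{jm}\}$ to the two endpoints is ambiguous, and one must argue that every such ambiguity is absorbed by the $\frak{S}_n$-action — i.e., different admissible choices produce isometric metric spaces, hence the same point of $\mathcal{M}et_n$. Concretely I would phrase the induction hypothesis not as "the distances are uniquely determined" but as "the set of metric spaces on $\{P_1,\dots,P_n\}$ consistent with $\widetilde{\mathcal{S}}_\triangle$ and $\widetilde{\mathcal{S}}_\otp$ forms a single isometry class," carrying the symmetry along at each step; the $3$-genericity is exactly what prevents two genuinely non-isometric realizations from agreeing on all triangle- and $3$-path-length multisets. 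A secondary technical point is handling small $n$ (the case $n=4$, where open $3$-paths already involve all four vertices) separately, but this is a finite check.
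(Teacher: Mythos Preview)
Your high-level plan --- fix a base triangle and attach the remaining vertices one at a time --- is the same as the paper's, but two confusions leave a real gap.

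First, you have misread condition $(\mathrm{g}_3)$. It forbids coincidences among sums of \emph{at most} three edge lengths with multiplicity; taking ``sums'' of a single edge each, this already forces all $d_{ij}$ to be pairwise distinct. So your anticipated ``main obstacle'' (repeated edge lengths) never occurs, and in fact the paper uses $3$-genericity only through this consequence, not through any statement about genuine sums. Your appeal to ``two distinct expressions as sums of $\le 3$ lengths being equal'' is therefore beside the point.

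Second, and more seriously, your use of $\widetilde{\mathcal{S}}_\otp$ does not match what that data actually is. An open $3$-path in the paper's sense has \emph{three} edges and four (not necessarily distinct) vertices; the object you write as ``$P_i$--$P_m$--$P_j$'' is a $2$-path and its edge-length pair is not part of the given data. What $\widetilde{\mathcal{S}}_\otp$ provides is, for each $(i,j,k,l)$ with $i\ne j\ne k\ne l$, the \emph{unordered} triple $[d_{ij},d_{jk},d_{kl}]$, with no marker for which length sits in the middle. The step your argument is missing is precisely how to recover that marker. The paper does it as follows: restrict to simple open $3$-paths (those triples in $\widetilde{\mathcal{S}}_\otp$ with no repeated entry), and observe that the two end edges of a simple $3$-path $P_i$--$P_j$--$P_k$--$P_l$ share no vertex, hence --- because edge lengths are distinct --- the pair $\{d_{ij},d_{kl}\}$ never occurs inside any element of $\widetilde{\mathcal{S}}_\triangle$, whereas each of $\{d_{ij},d_{jk}\}$ and $\{d_{jk},d_{kl}\}$ does. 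This singles out $d_{jk}$ as the middle length. With middle edges known, attaching a new vertex $P_m$ to the base edge $P_1P_2$ via the triangle $[d_{12},\ell_\lambda,\ell_\mu]\in\widetilde{\mathcal{S}}_\triangle$ has exactly two orientations, and the middle edge of the simple $3$-path $[\ell_\lambda,\ell_\mu,d_{23}]$ tells you which of $\ell_\lambda,\ell_\mu$ is $d_{2m}$. Your inductive step needs a concrete mechanism of this kind; as written it does not supply one.
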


\begin{proof}
The $3$-genericity condition ($\mbox{g}_3$) implies that the edge lengths are different from each other. Therefore from $\widetilde{\mathcal{S}}_\triangle$ 
we can obtain the multiset of the edge lengths $\widetilde{\mathcal{S}}_1=[d_{ij}\,|\,1\le i<j\le n]$ (in fact it is a set in this case). 

Let $\widetilde{\mathcal{S}}_\sotp$ (or $\mathcal{S}_\sotp$) be the multiset of the multisets of lengths of edges forming simple open $3$-paths (or respectively, with the information of the length of the middle edge):
\begin{eqnarray}
%
\widetilde{\mathcal{S}}_\sotp&=&\displaystyle \left[\,[d_{ij},d_{jk},d_{kl}]\,|\,(i,j,k,l)\in \mathcal{I}_\sotp \right],
\notag \\[1mm]
\mathcal{S}_\sotp&=&\displaystyle \left[(d_{jk}, [d_{ij},d_{kl}])\,|\,(i,j,k,l)\in \mathcal{I}_\sotp \right]. \notag
\end{eqnarray}

First remark that $\widetilde{\mathcal{S}}_\sotp$ can be obtained from $\widetilde{\mathcal{S}}_\otp$ by removing multisets with duplications. 

Next remark that the data of $\widetilde{\mathcal{S}}_\triangle$ and $\widetilde{\mathcal{S}}_\sotp$ produce $\mathcal{S}_\sotp$, namely, the information of the middle edges of simple open $3$-paths can be obtained from $\widetilde{\mathcal{S}}_\triangle$ and $\widetilde{\mathcal{S}}_\sotp$. 
This is because the two edges at the ends of a simple open $3$-path cannot form a triangle with another edge. 

Finally we show that a finite $3$-generic metric space is determined by $\widetilde{\mathcal{S}}_1, \widetilde{\mathcal{S}}_\triangle$ and $\mathcal{S}_\sotp$. Suppose we have the data of $\widetilde{\mathcal{S}}_1, \widetilde{\mathcal{S}}_\triangle$ and $\mathcal{S}_\sotp$. Choose a triangle and an edge of it. 
We may label the three vertices $P_1, P_2$ and $P_3$ so that the edge we selected is $P_1P_2$. Let $\ell_\a=d(P_2,P_3), \ell_\beta=d(P_3,P_1)$ and $\ell_\ga=d(P_1,P_2)$. 
There are still $n-3$ triangles containing the edge $P_1P_2$. 
There are two ways to attach each triangle to $P_1P_2$, but one is determined from the information of open 3-paths and middle edges as follows. 
Suppose $[\ell_\la,\ell_\mu,\ell_\ga]\in\widetilde{\mathcal{S}}_\triangle$. Let the remaining vertex be $P_4$, say. 
Note that both $\ell_\la, \ell_\mu, \ell_\a$ and $\ell_\la, \ell_\mu, \ell_\beta$ form open $3$-paths. 
If the middle edge of $\ell_\la, \ell_\mu, \ell_\a$ is $\ell_\la$ then the triangle $\triangle P_4P_1P_2$ is attached to the edge $P_1P_2$ in a way that $d(P_4,P_1)=\ell_\mu$ and $d(P_4,P_2)=\ell_\la$, and if not the other way. 

After attaching the remaining $n-4$ triangles to $P_1P_2$, label the remaining vertices $P_5,\dots,P_n$. The lengths $d(P_i,P_1), d(P_i,P_2)$ $(4\le i \le n)$ are determined by the procedure described above. 
The length $d(P_i,P_j)$ $(3\le i<j\le n)$ is determined as the unique element $\ell$ in $\widetilde{\mathcal{S}}_1$ such that $[\ell, d(P_i,P_1), d(P_j,P_1)]$ is an element of $\widetilde{\mathcal{S}}_\triangle$. 
\end{proof}

\medskip
(ii) Next we prepare a proposition which we will use to get a multiset consisting of the sums of lengths of edges forming triangles and a multiset consisting of the sums of lengths of edges forming open $3$-paths from the formal magnitude $m_X(q)$.

\begin{proposition}\label{Leinster}{\rm (Leinster \cite{L19})} 
The formal magnitude of a finite metric space $X$ is given by 
\begin{equation}\label{eqL}
m_X(q)=\sum_{k=0}^\infty(-1)^k\sum_{(i_0,\dots,i_k)\in\mathcal{I}_k}q^{d_{i_0i_1}+\dots+d_{i_{k-1}i_k}}.
\end{equation}
\end{proposition}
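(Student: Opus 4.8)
The plan is to expand the inverse similarity matrix as a geometric (Neumann) series and to read off the coefficient of each monomial $q^{s}$ as a signed count of walks with no immediate backtracking.

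First I would write $z_X(q)=I+B$, where $B=(B_{ij})$ has $B_{ij}=q^{d_{ij}}$ for $i\ne j$ and $B_{ii}=0$ (using $d_{ii}=0$). Put $d_{\min}=\min_{i\ne j}d_{ij}>0$. Since every entry of $B$ is a monomial $q^{d_{ij}}$ with $d_{ij}\ge d_{\min}$, every entry of $B^k$ is a generalized polynomial all of whose exponents are $\ge k\,d_{\min}$. Hence the series $S:=\sum_{k=0}^{\infty}(-1)^kB^k$ is a well-defined matrix of generalized formal power series: for any real $s$, only the finitely many terms with $k\le s/d_{\min}$ can contribute to the coefficient of $q^{s}$. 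A telescoping reindexing gives $(I+B)S=\sum_{k\ge0}(-1)^kB^k-\sum_{k\ge1}(-1)^{k}B^{k}=I$, and similarly $S(I+B)=I$; since $\det z_X(q)$ has constant term $1$, $z_X(q)$ is invertible in the field of generalized rational functions, and by uniqueness of inverses $S$ is exactly the generalized formal power series expansion of $z_X(q)^{-1}$.

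Next I would compute the entries of $B^k$. Expanding the matrix product,
\[
(B^k)_{i_0i_k}=\sum_{i_1,\dots,i_{k-1}}B_{i_0i_1}B_{i_1i_2}\cdots B_{i_{k-1}i_k},
\]
and because $B_{ii}=0$ the only surviving terms are those with $i_0\ne i_1\ne\cdots\ne i_k$, each contributing $q^{d_{i_0i_1}+\cdots+d_{i_{k-1}i_k}}$. Summing over the two free indices $i_0$ and $i_k$ then yields
\[
\sum_{i_0,i_k}(B^k)_{i_0i_k}=\sum_{(i_0,\dots,i_k)\in\mathcal{I}_k}q^{d_{i_0i_1}+\cdots+d_{i_{k-1}i_k}},
\]
because $\mathcal{I}_k$ is by definition the set of tuples with consecutive entries distinct (for $k=0$ this reads as the $n$ empty walks, each contributing $q^0=1$, matching the sum of the entries of $B^0=I$). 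Since taking the sum of all entries of a matrix is linear and compatible with the coefficient-wise convergence established above, summing these identities against $(-1)^k$ over $k\ge0$ gives $m_X(q)=\sum_{k\ge0}(-1)^k\sum_{(i_0,\dots,i_k)\in\mathcal{I}_k}q^{d_{i_0i_1}+\cdots+d_{i_{k-1}i_k}}$, as claimed.

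I do not expect a genuine obstacle: once the formal framework is fixed, the computation is elementary bookkeeping. The one point requiring care is the justification that the Neumann series converges in the ring of generalized formal power series and that its sum agrees with the rational-function inverse of $z_X(q)$ — this is precisely where the strict positivity of $d_{\min}$ is used (it would fail as $q\to1$, i.e. $t\to0^+$), and it is what licenses the term-by-term summation in the final step.
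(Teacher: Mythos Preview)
Your argument is correct and is precisely the standard Neumann-series proof: write $z_X(q)=I+B$, expand $(I+B)^{-1}=\sum_{k\ge0}(-1)^kB^k$ in the ring of generalized formal power series (using $d_{\min}>0$ to guarantee that only finitely many $k$ contribute to any fixed exponent), and read off the entries of $B^k$ as walks with consecutively distinct vertices. The paper itself does not give a proof of this proposition; it merely cites Leinster's Proposition~3.9 in \cite{L19} (stated there for graphs) and remarks that the same proof works for arbitrary finite metric spaces --- your write-up is exactly that adaptation, so there is nothing to compare.
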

In fact it was proved in Proposition 3.9 of \cite{L19} for graphs, and the proof given there works for any finite metric space as well (cf. formula (1) of \cite{HW} and Corollary 7.15 of \cite{LS}). 

\begin{definition}\label{def_d-index}  \rm 
\begin{enumerate}
\item 
Let $\mathcal{P}_{all}$ be the commutative monoid generated by $\widetilde{\mathcal{S}}_1=[d_{ij}]$;
\begin{equation}\label{P_all}
\mathcal{P}_{all}=\Big\{\sum_{i<j}a_{ij}d_{ij}\,|\,a_{ij}\in\bar\NN
\Big\}.
\end{equation}
\item Let $\mathcal{P}$ $(\mathcal{P}\subset\mathcal{P}_{all})$ be the set of positive exponents that appear in $m_X(q)$. 
%
\item 
We define the {\em $d$-index} of a term $q^{\,\sum_{i<j}a_{ij}d_{ij}}$ to be $\sum_{i<j}a_{ij}$. 
\end{enumerate}
\end{definition}
%

Let $m_{X,3}(q)$ be the sum of all the terms in $m_X(q)$ with the $d$-index less than or equal to $3$; 
\[
m_{X,3}(q)=\sum_{k=0}^3(-1)^k\sum_{(i_0,\dots,i_k)\in\mathcal{I}_k}q^{d_{i_0i_1}+\dots+d_{i_{k-1}i_k}}.
\]
By dividing $2$- and $3$-paths into closed paths and open paths we obtain 
\begin{eqnarray}
m_{X,3}(q)&=&\displaystyle n-2\sum_{i<j}q^{d_{ij}}+2\sum_{i<j}q^{2d_{ij}}
+2\sum_{\mathcal{I}_\scotwop} q^{d_{ij}+d_{jk}}
%
-6\sum_{\mathcal{I}_\triangle} q^{d_{ij}+d_{jk}+d_{ki}} 
-2\sum_{\mathcal{I}_\scotp} q^{d_{ij}+d_{jk}+d_{kl}}.
%
\label{m_3}
\end{eqnarray}
%

\medskip
(iii) Finally we give a lemma to obtain the exponents and coefficients 
of $m_X(q)$ 
from the magnitude function $M_X(t)$. 

\begin{lemma}\label{lemma_P} 
Suppose $m_X(q)$ is expressed as 
\[
m_X(q)=\sum_{m=0}^\infty a_mq^{\a_m} \quad (a_m\in\RR,\,\{\a_0,\a_1.\dots\}=\mathcal{P},\,\a_0<\a_1<\dots ).
\]
Then $\a_0=0$ and $\displaystyle a_0=\lim_{t\to+\infty}M_X(t)=\#X$, and $\a_m$ and $a_m$ are given inductively by 
\[
\begin{array}{rcl}
\a_m&=&\displaystyle \lim_{t\to+\infty}\frac{\displaystyle \left|\log \left(M_X(t)-\sum_{i=0}^{m-1}a_i\, e^{-t\a_i}\right)\right|}{t}, \\[6mm]
a_m&=&\displaystyle \lim_{t\to+\infty} e^{t\a_m}\left(M_X(t)-\sum_{i=0}^{m-1}a_i\, e^{-t\a_i}\right).
\end{array}
\]

\end{lemma}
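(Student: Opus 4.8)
The plan is to regard $M_X(t)=m_X(e^{-t})=\sum_{m=0}^\infty a_m e^{-t\a_m}$ as a generalized Dirichlet series and to strip off its terms one by one, exploiting that as $t\to+\infty$ each surviving summand $a_m e^{-t\a_m}$ dominates all the later ones exponentially. The base case is immediate from \eqref{eqL}: the $k=0$ block contributes the $n$ tuples $(i_0)\in\mathcal{I}_0$, each giving $q^0$, whereas every term with $k\ge1$ has exponent at least $d_{\min}:=\min_{i\ne j}d_{ij}>0$. Hence the least exponent is $\a_0=0$ with coefficient $a_0=n=\#X$, in agreement with $\lim_{t\to+\infty}M_X(t)=\#X$.

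Before inducting I would record two facts. First, every exponent lies in $\mathcal{P}_{all}$ and, since each $d_{ij}>0$, only finitely many exponents fall below any bound; thus the ordering $\a_0<\a_1<\cdots$ is well defined with $\a_m\to\infty$, and each $a_m$ is nonzero by the definition of $\mathcal{P}$. Second, using $|\mathcal{I}_k|=n(n-1)^k$ and bounding every term of \eqref{eqL} by $q^{k d_{\min}}$, the $k$-block is at most $n\big((n-1)e^{-t d_{\min}}\big)^k$, so the series converges absolutely for $t\ge t_0$ once $(n-1)e^{-t_0 d_{\min}}<1$. Writing $\rho_m(t):=M_X(t)-\sum_{i=0}^{m-1}a_i e^{-t\a_i}=\sum_{i\ge m}a_i e^{-t\a_i}$, absolute convergence at $t_0$ together with $\a_i\ge\a_{m+1}$ for $i\ge m+1$ yields the tail bound
\[
|\rho_{m+1}(t)|\le e^{-(t-t_0)\a_{m+1}}\sum_{i\ge m+1}|a_i|e^{-t_0\a_i}=C_{m+1}\,e^{-(t-t_0)\a_{m+1}}\qquad(t\ge t_0),
\]
so that $\rho_{m+1}(t)=O(e^{-t\a_{m+1}})$.

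The induction then runs as follows. Assume $\a_0,\dots,\a_{m-1}$ and $a_0,\dots,a_{m-1}$ are already known, so that $\rho_m(t)$ is computable from $M_X(t)$. Splitting $\rho_m(t)=a_m e^{-t\a_m}+\rho_{m+1}(t)$ and multiplying by $e^{t\a_m}$ gives $e^{t\a_m}\rho_m(t)=a_m+e^{t\a_m}\rho_{m+1}(t)$, where the last term is $O(e^{-t(\a_{m+1}-\a_m)})\to0$ by the tail bound; this is exactly the asserted formula for $a_m$. Since $a_m\ne0$, for large $t$ we have $\rho_m(t)=e^{-t\a_m}(a_m+o(1))$ with the bracket bounded away from $0$, whence $\log|\rho_m(t)|=-t\a_m+O(1)$; dividing $|\log|\rho_m(t)||$ by $t$ and letting $t\to\infty$ recovers $\a_m$. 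For $m\ge1$ the exponent $\a_m$ is positive, so $\log|\rho_m(t)|\to-\infty$, which is why the outer absolute value appears.

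I expect the only step needing genuine care to be the second one: the absolute convergence of the generalized power series for large $t$ and the resulting uniform tail estimate, since this is what licenses extracting the asymptotics term by term and rules out cancellation among the infinitely many later terms spoiling the leading behaviour. A minor point to flag is that $a_m$ may be negative (for instance the coefficient $-2$ of $q^{d_{ij}}$ in \eqref{m_3}), so $\rho_m(t)$ can be negative for large $t$; the logarithm in the exponent formula must therefore be read as $\log|\rho_m(t)|$, which is harmless because only $|\rho_m(t)|$ enters the limit.
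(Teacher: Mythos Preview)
The paper states Lemma~\ref{lemma_P} without proof, so there is no argument to compare against; your proof is correct and fills in exactly the kind of routine asymptotic extraction the author presumably had in mind. Your two ``care points'' are well chosen: the absolute convergence of \eqref{eqL} via the crude bound $n\big((n-1)e^{-td_{\min}}\big)^k$ is what makes the tail estimate $\rho_{m+1}(t)=O(e^{-t\a_{m+1}})$ legitimate, and your remark that the formula should be read with $\log|\rho_m(t)|$ (since $a_m$ can be negative, e.g.\ the $-2$ in \eqref{m_3}) is a genuine, if minor, correction to the statement as written. One small point you leave implicit is that the convergent series \eqref{eqL} actually equals $M_X(t)$ for large $t$; this follows because \eqref{eqL} is the entrywise sum of the Neumann series $\sum_k(-1)^k(z_X(q)-I)^k$, which converges to $z_X(q)^{-1}$ in operator norm precisely under your condition $(n-1)e^{-td_{\min}}<1$.
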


\subsection{Proof of Part (2) of Theorem \ref{theorem}}\label{part2} 
%
By Lemma \ref{lemma_P} we obtain the multiset $\mathcal{P}$ from $M_X(t)$. 
From $\mathcal{P}$ we can obtain the multiset of the edge lengths $\widetilde{\mathcal{P}}_1=[d_{12},\dots,d_{n-1n}]$ as follows. 
Remark that the rational independence condition (ri) implies that $d_{ij}$'s are all different from each other, hence $\widetilde{\mathcal{P}}_1$ is an ordinary set in fact. 
Define $\ell_1,\dots,\ell_N$ inductively, where $N={n\choose2}$, by 
\begin{equation}\label{how_to_determine_ell}
\begin{array}{l}
\displaystyle \ell_1=\min\mathcal{P}, \\[2mm]
\displaystyle \ell_2=\min\left(\mathcal{P}\setminus \{\tau\,\ell_1\,|\,\tau\in\NN\}\right), \\[2mm] 
\displaystyle \ell_3=\min\left(\mathcal{P}\setminus \{\tau_1\,\ell_1+\tau_2\,\ell_2\,|\,\tau_1,\tau_2\in\NN\cup\{0\}\}\right), \\[2mm]
\dots
\end{array}
\notag
\end{equation}
Then the (multi)set of edge lengths $\widetilde{\mathcal{P}}_1$ is given by the (multi)set $[\ell_1,\dots,\ell_N]$. 

Put 
\[\begin{array}{rcl}
\widetilde{\mathcal{S}}_p&=&\displaystyle [\,[\ell_{\a_{i_1}},\dots,\ell_{\a_{i_p}}]\,|\,1\le \a_{i_1}\le\dots\le\a_{i_p}\le N] \qquad (p\in\NN), \\[2mm]
\widetilde{\mathcal{S}}&=&\displaystyle \bigcup_{p\in\NN}\widetilde{\mathcal{S}}_p.
\end{array}
\]
Note that $\widetilde{\mathcal{S}}_1=\widetilde{\mathcal{P}}_1$ and that $\widetilde{\mathcal{S}}_3\supset \widetilde{\mathcal{S}}_\triangle, \widetilde{\mathcal{S}}_\otp$. 
The rational independence condition (ri) implies that a map $\Sigma\colon\widetilde{\mathcal{S}}\to\RR$ given by 
\[\Sigma\left(\,[\ell_{\a_{i_1}},\dots,\ell_{\a_{i_p}}]\,\right)=\ell_{\a_{i_1}}+\dots+\ell_{\a_{i_p}}\]
is injective. 
Since the ``triangle'' terms and the ``{open $3$-path}'' terms in \eqref{m_3} have different coefficients, by comparing $\mathcal{P}$ and $\Sigma(\widetilde{\mathcal{S}}_3)$ 
we obtain $\Sigma(\widetilde{\mathcal{S}}_\triangle)$ and $\Sigma(\widetilde{\mathcal{S}}_\otp)$.
Since $\Sigma$ is injective, the conclusion follows from Lemma \ref{lemma_determine}. 

\subsection{Proof of Part (3) of Theorem \ref{theorem}}\label{part3} 
The proof is almost the same as the previous case.

The strict virtual triangle inequality condition implies 
\[\max_{1\le\a\le N} \ell_\a<\min \bigcup_{p\ge 2}\,\Sigma(\widetilde{\mathcal{S}}_p),\]
hence $\ell_\a$ $(1\le\a\le N)$ can be determined as the first $N$ smallest numbers of $\mathcal{P}$. 
Since the strict virtual triangle inequality condition implies $\Sigma(\widetilde{\mathcal{S}}_3)\,\cap\,\cup_{p\ge 6}\,\Sigma(\widetilde{\mathcal{S}}_p)=\emptyset$ and the $5$-genericity condition ($\mbox{g}_5$) implies $\Sigma\colon \cup_{p\le 5}\Sigma(\widetilde{\mathcal{S}}_p)\to\RR$ is injective, we obtain $\Sigma(\widetilde{\mathcal{S}}_\triangle)$ and $\Sigma(\widetilde{\mathcal{S}}_\otp)$. 
The rest of the proof is same as in the previous case.

\subsection{Proof of Part (4) of Theorem \ref{theorem}}\label{part4} 

When the number of points is four, each edge has exactly one disjoint edge, which we call the {\em opposite} edge, and accordingly, $\mathcal{I}_{\mbox{\footnotesize disj}}$ will be denoted by $\mathcal{I}_{\mbox{\footnotesize opp}}$ hereafter. 
Note that a four point set has three pairs of opposite edges. 

Without the rational independence condition, it can happen that the combination of edges cannot be determined from $m_X(q)$, as was the case in the example of graphs in the Introduction (Figure \ref{two_graphs}). 
This complicates the proof. 

Suppose we know the generalized formal power series expression of $m_X(q)$. The proof consists of the following four steps. 
\begin{enumerate}
\item[1.] The multiset of the edge lengths is determined. 
\item[2.] The multiset of the sums of lengths of pairs of opposite edges is determined. 
\item[3.] The combination of opposite edges that give the sums mentioned above is determined. 
\item[4.] One of the two possible ``tetrahedra'' is determined. 
\end{enumerate}

Step 1. The strict virtual triangle inequality condition implies that the multiset of the edge lengths $\widetilde{\mathcal{P}}_1=[\ell_1,\dots,\ell_6]$ $(\ell_1\le \dots \le \ell_6)$ is obtained by taking $N$ numbers with multiplicity from $\mathcal{P}$, increasing from the smallest. 
The multiplicity can be determined by the coefficient of $q^{d_{ij}}$ in $m_X(q)$ divided by $-2$.

\medskip
Step 2. Put for $p=1,2$ and $3$ 
\[
\si_p(q)=\sum_{i<j}q^{p\,d_{ij}}=q^{p\,\ell_1}+\dots+q^{p\,\ell_6}.
\]
Define $f(q)$ by modifying $m_X(q)$ as 
\begin{equation}\label{def_f}
f=m_X-4+2\si_1-\si_1^2-\si_2+\si_1\si_2+\frac13\si_1^3+\frac23\si_3.
\end{equation}
Assume that the terms of $f(q)$ are in order of increasing power.
Let $f_3(q)$ be the sum of all the terms appearing in $f(q)$ with $d$-index less than or equal to $3$. 
Since $-2\sum_{\mathcal{I}_\scotp} q^{d_{ij}+d_{jk}+d_{kl}}$ in \eqref{m_3} is equal to 
\[
\displaystyle 
-2\Big(\sum_{i<j}q^{2d_{ij}}\Big)\Big(\sum_{k<l}q^{d_{kl}}\Big) 
%
%
-2\sum_{\mathcal{I}_\scsotp} q^{d_{ij}+d_{jk}+d_{kl}}
+2\sum_{\mathcal{I}_{\mbox{\rm \scriptsize opp}}} \left(q^{2d_{ij}+d_{kl}}+q^{d_{ij}+2d_{kl}}\right),
\]
$f_3(q)$ is given by 
\begin{equation}\label{f_3}
\begin{array}{rcl}
f_3(q)&=&\displaystyle -2\sum_{\mathcal{I}_{\mbox{\scriptsize  opp}}} q^{d_{ij}+d_{kl}} \\[6mm]
&&\displaystyle 
-4\sum_{\mathcal{I}_\triangle} q^{d_{ij}+d_{jk}+d_{ki}}
+2\sum_{\mathcal{I}_{\mbox{\scriptsize  vtx}}} q^{d_{ij}+d_{ik}+d_{il}}
+2\sum_{\mathcal{I}_{\mbox{\scriptsize  opp}}} \left(q^{2d_{ij}+d_{kl}}+q^{d_{ij}+2d_{kl}}\right),
\end{array}
\end{equation}
where
\[
\mathcal{I}_{\mbox{\footnotesize  vtx}}=\{(i,j,k,l)\,|\,1\le i\le4,\,j<k<l,\,\{j,k,l\}=\{1,2,3,4\}\setminus\{i\}\}.
\]

\begin{lemma}\label{claim} 
Assume $(i,j,k,l)\in \mathcal{I}_{\mbox{\rm \footnotesize opp}}$. Then the following holds. 
\begin{enumerate}
\item $d_{ij}+d_{kl}<d_{i'j'}+d_{i'k'}+d_{i'l'}$ for any $(i',j',k',l')\in \mathcal{I}_{\triangle}$. 
\item $d_{ij}+d_{kl}<d_{i'j'}+d_{i'k'}+d_{i'l'}$ for any $(i',j',k',l')\in \mathcal{I}_{\mbox{\rm \footnotesize vtx}}$. 
\item $d_{ij}+d_{kl}$ is smaller than the exponent of any term of $f(q)$ with $d$-index $4$ or more. 
\end{enumerate}
\end{lemma}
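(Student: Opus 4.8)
The plan is to reduce all three inequalities to a single numerical consequence of the strict virtual triangle inequality condition. Write $m=\min_{i\ne j}d_{ij}$ and $M=\max_{i\ne j}d_{ij}$, so the condition reads $M<2m$. The one combinatorial ingredient is that for any point $P_v$ of the $4$-point set, exactly one of the two edges of a given opposite pair is incident to $P_v$; this holds because the two opposite edges are disjoint and together meet all four points.

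For part (1), fix an opposite pair $\{P_iP_j,\,P_kP_l\}$ and a triangle on three of the points, omitting some vertex $P_a$. Applying the observation at $P_v=P_a$: one edge of the pair, say $e$ with length $d_e$, does not contain $P_a$ and is therefore a side of the triangle, while the other edge $e'$ of the pair contains $P_a$ and is not. Hence the perimeter $d_{i'j'}+d_{j'k'}+d_{k'i'}$ equals $d_e$ plus the lengths of the two remaining sides of the triangle, whereas $d_{ij}+d_{kl}=d_e+d_{e'}$, so the difference is the sum of those two remaining sides minus $d_{e'}$, which is at least $2m-M>0$. For part (2), fix an opposite pair and a vertex $P_{i'}$, and apply the observation at $P_v=P_{i'}$: one edge $e$ of the pair is incident to $P_{i'}$, hence is one of the three edges of the star at $P_{i'}$, while the other edge $e'$ is not incident to $P_{i'}$; thus $d_{i'j'}+d_{i'k'}+d_{i'l'}=d_e+(\text{the two other star edges})$ and $d_{ij}+d_{kl}=d_e+d_{e'}$, giving the same estimate $2m-M>0$ for the difference. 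For part (3), any term of $f(q)$ of $d$-index at least $4$ has exponent equal to a sum of at least four edge lengths counted with multiplicity, hence at least $4m$, whereas $d_{ij}+d_{kl}\le 2M<4m$.

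So all three inequalities are immediate once $M<2m$ is written down, and I do not expect any serious obstacle. The points I would take care over are stating the combinatorial dichotomy so that it applies uniformly to the triangle case and the vertex-star case, and the bookkeeping that frames the lemma: after the modifications in \eqref{def_f}, the terms of $f(q)$ of $d$-index at most $3$ are precisely those displayed in \eqref{f_3}, so that parts (1)--(3) together say that the opposite-edge terms $q^{d_{ij}+d_{kl}}$ are the lowest-order terms of $f(q)$ -- which is exactly what Step 2 of the proof of Part (4) uses.
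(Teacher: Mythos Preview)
Your argument is correct and follows essentially the same route as the paper's proof: both identify the single common edge between an opposite pair and the three edges of a triangle (resp.\ a vertex star), cancel it, and bound the remaining difference. Your treatment is slightly more explicit and uniform in writing $m=\min d_{ij}$, $M=\max d_{ij}$ and using $2m-M>0$ throughout; the paper leaves the final inequality in (2) implicit (it is in fact an honest triangle inequality $d_{i'k'}+d_{i'l'}>d_{k'l'}$ for the remaining edges), and for (3) simply cites the strict virtual triangle inequality without spelling out the $4m>2M$ bound you give.
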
 

\begin{proof}
(1) Any pair of opposite edges has exactly one edge in common with three edges of any triangle. The remaining inequality is a consequence of the strict virtual triangle inequality. 

(2) Any pair of opposite edges has exactly one edge in common with three edges having one common vertex.  

(3) Consequence of the strict virtual triangle inequality. 
\end{proof}

Put 
\[
\begin{array}{rcl}
\widetilde{\mathcal{P}}_{\rm opp,2}&=&\displaystyle [ d_{ij}+d_{kl} \,|\, (i,j,k,l)\in \mathcal{I}_{\rm opp} ], \\[2mm]
\widetilde{\mathcal{P}}_{\rm opp,3}&=&\displaystyle [ 2d_{ij}+d_{kl}, d_{ij}+2d_{kl} \,|\, (i,j,k,l)\in \mathcal{I}_{\rm opp} ]. 
\end{array}
\]

\begin{lemma}\label{lemma_2-ii}
Let $[\ell_1,\dots,\ell_6]$ be the multiset of edge lengths. 
If two elements of $\widetilde{\mathcal{P}}_{\rm opp,2}$ appear in $\widetilde{\mathcal{P}}_{\rm opp,3}$, 
we can find $\a,\beta,\ga,\de,\la,\mu$ 
that satisfy the following; 
\begin{enumerate}
\item $\{\a,\beta,\ga,\de,\la,\mu\}=\{1,2,3,4,5,6\}$, 

$\ell_\a\le\ell_\beta, \ell_\ga\le\ell_\de, \ell_\la\le\ell_\mu$, $\ell_\a+\ell_\beta \le \ell_\ga+\ell_\de \le \ell_\la+\ell_\mu$,
\item $\ell_\a+\ell_\beta=(\ell_1+\dots+\ell_6)/4$,
\item $\ell_\ga+\ell_\de=2\ell_\a+\ell_\beta$ and $\ell_\la+\ell_\mu=\ell_\a+2\ell_\beta$.
\end{enumerate}
\end{lemma}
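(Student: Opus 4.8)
The plan is to show that, under the strict virtual triangle inequality (svti), the hypothesis pins the situation down almost completely: after a harmless relabelling, the three pairs $\{\ell_\alpha,\ell_\beta\},\{\ell_\gamma,\ell_\delta\},\{\ell_\lambda,\ell_\mu\}$ will just be the three pairs of opposite edges, ordered so that their sums increase. So the work is to eliminate every other possibility, and then the three identities are immediate.

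First I would fix notation. Write the three pairs of opposite edges as $\{x_i,y_i\}$ with $x_i\le y_i$ ($i=1,2,3$), set $\sigma_i=x_i+y_i$, and relabel the pairs so that $\sigma_1\le\sigma_2\le\sigma_3$. Then $\widetilde{\mathcal{P}}_{\rm opp,2}=[\sigma_1,\sigma_2,\sigma_3]$, $\widetilde{\mathcal{P}}_{\rm opp,3}=[\sigma_i+x_i,\,\sigma_i+y_i\mid i=1,2,3]$, and $T:=\ell_1+\cdots+\ell_6=\sigma_1+\sigma_2+\sigma_3$. I would then observe that part (2) of the conclusion need not be proved on its own: summing the three pair sums in (3) over the partition $\{1,\dots,6\}$ gives $4(\ell_\alpha+\ell_\beta)=T$, so (2) follows from (1) and (3). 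Hence the goal is to produce the partition realising (1) and (3).

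The core step is a case elimination. Every entry of $\widetilde{\mathcal{P}}_{\rm opp,3}$ equals some $\sigma_i$ plus a positive edge length, hence is $>\sigma_i\ge\sigma_1$; therefore $\sigma_1\notin\widetilde{\mathcal{P}}_{\rm opp,3}$, and the two elements of $\widetilde{\mathcal{P}}_{\rm opp,2}$ that by hypothesis lie in $\widetilde{\mathcal{P}}_{\rm opp,3}$ must be $\sigma_2$ and $\sigma_3$ (counted with multiplicity when $\sigma_2=\sigma_3$). Writing $\sigma_2=\sigma_j+z$ with $z\in\{x_j,y_j\}$, the inequality $\sigma_j<\sigma_2$ forces $j=1$; writing $\sigma_3=\sigma_{j'}+z'$ with $z'\in\{x_{j'},y_{j'}\}$ forces $j'\in\{1,2\}$, and the branch $j'=2$ would give $\sigma_3-\sigma_1=z+z'\ge 2\ell_1$, contradicting $\sigma_3-\sigma_1\le 2\ell_6-2\ell_1<2\ell_1$, where the last inequality is exactly svti. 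So $j'=1$ as well, i.e. both $\sigma_2-\sigma_1$ and $\sigma_3-\sigma_1$ lie in $\{x_1,y_1\}$. The remaining degenerate possibility is that these two differences coincide while $x_1\ne y_1$: then $\sigma_2=\sigma_3=\sigma_1+z$ would have multiplicity $2$ in $\widetilde{\mathcal{P}}_{\rm opp,2}$, so $\sigma_1+z$ would have to occur at least twice in $\widetilde{\mathcal{P}}_{\rm opp,3}$; but the candidate duplicates $\sigma_2+x_2,\sigma_2+y_2,\sigma_3+x_3,\sigma_3+y_3$ are all $>\sigma_1+z$, and the partner term $\sigma_1+y_1$ of the first pair differs from $\sigma_1+z$ since $x_1\ne y_1$, so $\sigma_1+z$ occurs only once — a contradiction. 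Thus, after possibly swapping the names $x_1\leftrightarrow y_1$, one has $\sigma_2=\sigma_1+x_1$ and $\sigma_3=\sigma_1+y_1$.

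From here the conclusion drops out: choose indices so that $\{\ell_\alpha,\ell_\beta\}=\{x_1,y_1\}$ with $\ell_\alpha=x_1\le y_1=\ell_\beta$, and let $\{\ell_\gamma,\ell_\delta\}$, $\{\ell_\lambda,\ell_\mu\}$ be the second and third opposite pairs, each ordered increasingly. Then the pair sums are $\sigma_1\le\sigma_2\le\sigma_3$, giving (1); $T=\sigma_1+(\sigma_1+x_1)+(\sigma_1+y_1)=4\sigma_1$ gives $\ell_\alpha+\ell_\beta=\sigma_1=T/4$, which is (2); and $\ell_\gamma+\ell_\delta=\sigma_2=2x_1+y_1=2\ell_\alpha+\ell_\beta$, $\ell_\lambda+\ell_\mu=\sigma_3=x_1+2y_1=\ell_\alpha+2\ell_\beta$, which is (3). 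I expect the only real obstacle to be the bookkeeping in the core step — making precise the reading of ``two elements appear'' with multiplicities, and using svti to kill both the coincidence $\sigma_2=\sigma_3$ and the branch in which $\sigma_3$ exceeds $\sigma_2$ by an edge length; once those are cleared, nothing computational remains.
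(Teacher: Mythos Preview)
Your proof is correct and follows the same overall strategy as the paper's: note that (2) follows from (3), observe that the smallest opposite-pair sum $\sigma_1$ cannot lie in $\widetilde{\mathcal{P}}_{\rm opp,3}$, so the two appearing elements must be $\sigma_2$ and $\sigma_3$, and then show that both $\sigma_2-\sigma_1$ and $\sigma_3-\sigma_1$ are forced to be the two edges of the first pair. The divergence is in how the ``chained'' branch $\sigma_3=\sigma_2+z'$ with $z'\in\{x_2,y_2\}$ is eliminated. The paper introduces the ratio $t=\ell_\alpha/(\ell_\alpha+\ell_\beta)\in(1/3,1/2]$ and, via $\ell_\lambda+\ell_\mu>\tfrac{4}{3}(\ell_\gamma+\ell_\delta)$, squeezes out $\ell_\mu>2\ell_\alpha$, contradicting svti; you instead note in one line that this branch yields $\sigma_3-\sigma_1=z+z'\ge 2\ell_1$, while svti gives $\sigma_3-\sigma_1\le 2\ell_6-2\ell_1<2\ell_1$. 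Your argument is shorter and avoids the auxiliary parameter entirely. You are also more careful than the paper about the multiset reading of ``two elements appear'' in the degenerate case $\sigma_2=\sigma_3$, which the paper passes over silently.
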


\begin{proof} 
First note that (2) is a consequence of (3). 

We may assume without loss of generality that $\widetilde{\mathcal{P}}_{\rm opp,2}$ is given by $[\ell_\a+\ell_\beta, \ell_\ga+\ell_\de, \ell_\la+\ell_\mu]$ with condition (1) above. 
Since the smallest element $\ell_\a+\ell_\beta$ cannot appear in $\widetilde{\mathcal{P}}_{\rm opp,3}$, it is enough to show that the case 
\begin{equation}\label{cannot_happen}
\begin{array}{rcl}
\ell_\ga+\ell_\de&=&2\ell_\a+\ell_\beta \quad \mbox{ or }\quad \ell_\a+2\ell_\beta, \\[2mm]
\ell_\la+\ell_\mu&=&2\ell_\ga+\ell_\de \quad \mbox{ or }\quad \ell_\ga+2\ell_\de
\end{array}
\end{equation}
cannot happen. 

Assume \eqref{cannot_happen}. 
By the strict virtual triangle inequality condition we have $\ell_\ga>(\ell_\ga+\ell_\de)/3$, which implies $\ell_\la+\ell_\mu>\frac43(\ell_\ga+\ell_\de)$. 
Put $t=\ell_\a/(\ell_\a+\ell_\beta)$. The assumption $\ell_\a\le\ell_\beta$ and the strict virtual triangle inequality condition imply $1/3<t\le 1/2$. Therefore we have
\[
\ell_\la+\ell_\mu>\frac43(\ell_\ga+\ell_\de)\ge\frac43(t+1)(\ell_\a+\ell_\beta)
=\frac{4(t+1)}{3t}\ell_\a, 
\]
which implies 
\[
\ell_\mu\ge\frac12(\ell_\la+\ell_\mu)>\frac23\left(1+\frac1t\right)\ell_\a.
\]
On the other hand, since $t\le1/2$, it means $\ell_\mu>2\ell_\a$, which contradicts the strict virtual triangle inequality condition. 
\end{proof}

We remark that if (1), (2) and (3) of Lemma \ref{lemma_2-ii} are satisfied then $\ell_\la+\ell_\mu$ cannot be equal to either $2\ell_\ga+\ell_\de$ or $\ell_\ga+2\ell_\de$ by the strict virtual triangle inequality condition.

\begin{proposition}
The multiset of the sums of edge lengths of pairs of opposite edges, 
$\widetilde{\mathcal{P}}_{\rm opp,2}=\displaystyle [ d_{ij}+d_{kl} \,|\, (i,j,k,l)\in \mathcal{I}_{\rm opp} ] $ can be obtained from the generalized formal power series expression of $m_X(q)$. 
\end{proposition}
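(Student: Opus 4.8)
The plan is to extract $\widetilde{\mathcal{P}}_{\rm opp,2}$ from the low-$d$-index part $f_3(q)$ of the modified series $f(q)$, using the sign of the coefficients to separate the "opposite pair" terms from the triangle and vertex-star terms. Concretely, I would first observe that by Lemma~\ref{claim}(3) the exponents $d_{ij}+d_{kl}$ with $(i,j,k,l)\in\mathcal{I}_{\rm opp}$ are strictly smaller than the exponent of any term of $f(q)$ of $d$-index $\ge4$, and by parts (1) and (2) they are strictly smaller than the exponents $d_{i'j'}+d_{i'k'}+d_{i'l'}$ coming from triangles and from vertex-stars in \eqref{f_3}. Hence the terms of $f(q)$ whose exponents are small enough are exactly the $-2\,q^{d_{ij}+d_{kl}}$ terms, \emph{unless} one of these exponents happens to coincide with an exponent of $\widetilde{\mathcal{P}}_{\rm opp,3}$, i.e. with some $2d_{ij}+d_{kl}$ or $d_{ij}+2d_{kl}$.

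The second step handles that collision. The three "opposite-pair" exponents are $s_1\le s_2\le s_3$ where $s_r$ runs over $\{d_{12}+d_{34},\,d_{13}+d_{24},\,d_{14}+d_{23}\}$; the three $\widetilde{\mathcal{P}}_{\rm opp,3}$ exponents sitting "on top of" $s_r$ are $2\cdot(\text{smaller edge})+(\text{larger})$ and its partner, which lie in the interval $(\tfrac43 s_r,\,\tfrac32 s_r)$ by the strict virtual triangle inequality. I would argue that $s_1$ never collides with any $\widetilde{\mathcal{P}}_{\rm opp,3}$-exponent: each such exponent is $>\tfrac43$ times some $s_{r'}\ge s_1$, hence $>s_1$. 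So $s_1$ is always read off as the smallest exponent of $f(q)$, with coefficient $-2$ (after accounting for a possible extra $+2$ if $s_1=2d_{ij}+d_{kl}$ for some pair — but the strict virtual triangle inequality rules this out too, since that would force an edge to exceed twice another). For $s_2$ and $s_3$ the possible collisions are governed precisely by the dichotomy analysed in Lemma~\ref{lemma_2-ii}: if at most one of $s_2,s_3$ appears in $\widetilde{\mathcal{P}}_{\rm opp,3}$ the coefficient pattern $-2$ (or $-2+2=0$, or $-2+4=-2$ if both $2d_{ij}+d_{kl}$ and $d_{ij}+2d_{kl}$ land there) still lets us recover the value — and a vanishing coefficient is itself informative. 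The genuinely dangerous case is when \emph{two} elements of $\widetilde{\mathcal{P}}_{\rm opp,2}$ appear in $\widetilde{\mathcal{P}}_{\rm opp,3}$; there Lemma~\ref{lemma_2-ii} asserts we can solve for $\alpha,\beta,\gamma,\delta,\lambda,\mu$, and in particular $\ell_\alpha+\ell_\beta=(\ell_1+\dots+\ell_6)/4$ reconstructs the smallest pair-sum directly from the already-known multiset $[\ell_1,\dots,\ell_6]$, whence $s_2=2\ell_\alpha+\ell_\beta$ and $s_3=\ell_\alpha+2\ell_\beta$ by Lemma~\ref{lemma_2-ii}(3) and the remark following it.

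So the overall scheme is: (a) from $f_3(q)$ list the candidate small exponents and their coefficients; (b) the smallest is $s_1$, always clean; (c) test whether the combinatorial configuration of Lemma~\ref{lemma_2-ii} occurs — this is detectable because it forces specific arithmetic relations among the observed exponents and the known $\ell_i$; (d) if it does, recover $s_1,s_2,s_3$ via $\ell_\alpha+\ell_\beta=(\sum\ell_i)/4$ and the formulas of Lemma~\ref{lemma_2-ii}(3); (e) if it does not, at most one of $s_2,s_3$ is obscured by a $\widetilde{\mathcal{P}}_{\rm opp,3}$-term, and since the obscuring terms and $f_3$'s triangle/vertex terms all have coefficient of known sign and the exponents are separated by Lemma~\ref{claim}, a direct bookkeeping of coefficients recovers all three pair-sums. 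Assembling these gives the multiset $\widetilde{\mathcal{P}}_{\rm opp,2}$.

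I expect the main obstacle to be step (e): carefully verifying that in the "at most one collision" regime the coefficient bookkeeping is unambiguous — one must check that a term of $f_3$ arising from a triangle or vertex-star cannot accidentally sit at the same exponent as one of $s_2,s_3$ or as one of the relevant $\widetilde{\mathcal{P}}_{\rm opp,3}$-terms, and that the $5$- or here rather the small-$n$ genericity built into the strict virtual triangle inequality (plus $n=4$) suffices to keep these three exponent families from interleaving badly. Lemma~\ref{claim} does most of this separation, so the remaining work is to combine it with Lemma~\ref{lemma_2-ii} into a complete case analysis; the computations are light and, as the paper notes elsewhere, can be cross-checked with Maple.
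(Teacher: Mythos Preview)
Your approach is essentially the paper's: use Lemma~\ref{claim} to isolate the opposite-pair exponents in $f_3(q)$ from the triangle, vertex-star, and $d$-index $\ge4$ terms, then split into the two cases of Lemma~\ref{lemma_2-ii} according to how many elements of $\widetilde{\mathcal{P}}_{\rm opp,2}$ collide with $\widetilde{\mathcal{P}}_{\rm opp,3}$. Your handling of Case~2 (two collisions) matches the paper's.

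The paper is crisper than your step~(e) on Case~1, and this is worth adopting. Rather than attempting a full coefficient bookkeeping to recover all three pair-sums directly, the paper observes that in Case~1 at least \emph{two} of the three terms $-2q^{d_{ij}+d_{kl}}$ survive in $f(q)$ (with coefficient $-2$, or combining to $-4$ if two pair-sums coincide); one reads off these two exponents as the first terms of $f(q)$ with negative coefficient, and then recovers the third pair-sum from the identity $s_1+s_2+s_3=\ell_1+\dots+\ell_6$. This sidesteps entirely the worry you flag about a triangle or vertex-star exponent accidentally coinciding with the obscured $s_r$. Your scheme would work too, but you would have to chase several subcases; the sum-constraint trick makes them unnecessary.

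On the case distinction itself, you say it is ``detectable because it forces specific arithmetic relations''; the paper makes this explicit. Case~2 holds iff: the first term of $f(q)$ has coefficient $-2$ with exponent $\ell_\alpha+\ell_\beta$ for some $\alpha\ne\beta$; one can choose $\{\gamma,\delta,\lambda,\mu\}=\{1,\dots,6\}\setminus\{\alpha,\beta\}$ with $\ell_\gamma+\ell_\delta=2\ell_\alpha+\ell_\beta$ and $\ell_\lambda+\ell_\mu=\ell_\alpha+2\ell_\beta$; and no term of $f(q)$ with coefficient $-2$ or $-4$ has exponent $\ell_\gamma+\ell_\lambda$, $\ell_\gamma+\ell_\mu$, $\ell_\delta+\ell_\lambda$ or $\ell_\delta+\ell_\mu$. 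One small inaccuracy: your interval $(\tfrac43 s_r,\tfrac32 s_r)$ for the $\widetilde{\mathcal{P}}_{\rm opp,3}$-exponents over $s_r$ is wrong at the top end (the larger one, $a+2b$, lies in $[\tfrac32 s_r,\tfrac53 s_r)$), but only the lower bound $>\tfrac43 s_r$ is used, so the argument that $s_1$ is always clean is unaffected.
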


\begin{proof}
Since it is impossible that all the three elements of $\widetilde{\mathcal{P}}_{\rm opp,2}$ appear in $\widetilde{\mathcal{P}}_{\rm opp,3}$, there are only two possibilities:

\medskip\noindent
Case 1. At most one element of $\widetilde{\mathcal{P}}_{\rm opp,2}$ appears in $\widetilde{\mathcal{P}}_{\rm opp,3}$. 

\medskip\noindent
Case 2. Two elements of $\widetilde{\mathcal{P}}_{\rm opp,2}$ appear in $\widetilde{\mathcal{P}}_{\rm opp,3}$. 

\medskip
First we show that if either Case 1 or Case 2 is known in advance, in each case $\widetilde{\mathcal{P}}_{\rm opp,2}$ can be obtained from $f(q)$. 
Assume $f(q)$ is arranged in increasing powers of $q$. 

Case 1. Lemma \ref{claim} implies that at least two of the terms $q^{d_{ij}+d_{kl}}$, where $(i,j,k,l)\in \mathcal{I}_{\mbox{\footnotesize opp}}$, in \eqref{f_3} survive, i.e. the coefficients do not cancel out. 
Take the first term in $f(q)$ with coefficient $-4$ if exists or if not the first two terms with coefficient $-2$. 
Then the exponent(s) give(s) two elements of $\widetilde{\mathcal{P}}_{\rm opp,2}$. The remaining one element can be obtained by subtracting the sum of the two from $\ell_1+\dots+\ell_6$.  

Case 2. The first term of $f(q)$ has coefficient $-2$ and exponent $\ell_a+\ell_\beta$ in Lemma \ref{lemma_2-ii}. 
The strict virtual triangle inequality condition implies that if there are two pairs $\{\a,\beta\}$ $(\a\ne\beta)$ and $\{\a',\beta'\}$ $(\a'\ne\beta')$ with 
\[\ell_\a+\ell_\beta=\ell_{\a'}+\ell_{\beta'}=\frac{\ell_1+\dots+\ell_6}4\]
then $\#\{\a,\beta,\a',\beta'\}\le3$, namely, the values of $\ell_\a$ and $\ell_\beta$ in Lemma \ref{lemma_2-ii} are fixed. 
Then the remaining two elements of $\widetilde{\mathcal{P}}_{\rm opp,2}$ can be obtained by $2\ell_\a+\ell_\beta$ and $\ell_\a+2\ell_\beta$.

\medskip
Next we show that it can be determined whether Case 1 or Case 2 is occurring from the information of $f(q)$. In fact, Case 2 can occur if and only if the following conditions are all satisfied. 

\medskip\noindent
The coefficient of the first term of $f(q)$ is equal to $-2$. 
The exponent of this term is given by $\ell_\a+\ell_\beta$ for some $\a$ and $\beta$ $(\a\ne\beta)$. 
We can choose $\ga,\de,\la,\mu$ such that $\{\ga,\de,\la,\mu\}=\{1,\dots,6\}\setminus\{\a,\beta\}$ and  $\ell_\ga+\ell_\de=2\ell_\a+\ell_\beta$ and $\ell_\la+\ell_\mu=\ell_\a+2\ell_\beta$ hold. 
There is no term with exponent $\ell_\ga+\ell_\la, \ell_\ga+\ell_\mu, \ell_\de+\ell_\la$ or $\ell_\de+\ell_\mu$ with coefficient $-2$ or $-4$ in $f(q)$. 
\end{proof}

\medskip
Step 3. 
Given a multiset of edge lengths $\widetilde{\mathcal{P}}_1=[\ell_1, \dots, \ell_6]$  $(\ell_1\le \dots \le \ell_6)$ and a multiset of the sums of opposite edges $\widetilde{\mathcal{P}}_{\rm opp,2}$, if the combination of opposite edges that realizes the sums is not unique, there are only the following two cases.  

\begin{enumerate}
\item[1.] $\ell_2-\ell_1=\ell_5-\ell_4$ and $\ell_3-\ell_2=\ell_6-\ell_5$. Then 
\[
(\ell_1+\ell_5, \ell_3+\ell_4, \ell_2+\ell_6)=(\ell_2+\ell_4, \ell_1+\ell_6, \ell_3+\ell_5).
\] 
\item[2.] $\ell_2-\ell_1=\ell_4-\ell_3=\ell_6-\ell_5$. Then 
\[
(\ell_1+\ell_4, \ell_2+\ell_5,\ell_3+\ell_6)=(\ell_2+\ell_3,\ell_1+\ell_6,\ell_4+\ell_5).
\]
\end{enumerate}

Case 1. Let ${\rm COMB1}=[\ell_1+\ell_5, \ell_3+\ell_4, \ell_2+\ell_6]$ and ${\rm COMB2}=[\ell_2+\ell_4, \ell_1+\ell_6, \ell_3+\ell_5]$. 
Each combination has two possible configurations as illustrated in Figure \ref{four_config}. 
\begin{figure}[htbp]
\begin{center}
\includegraphics[width=.8\linewidth]{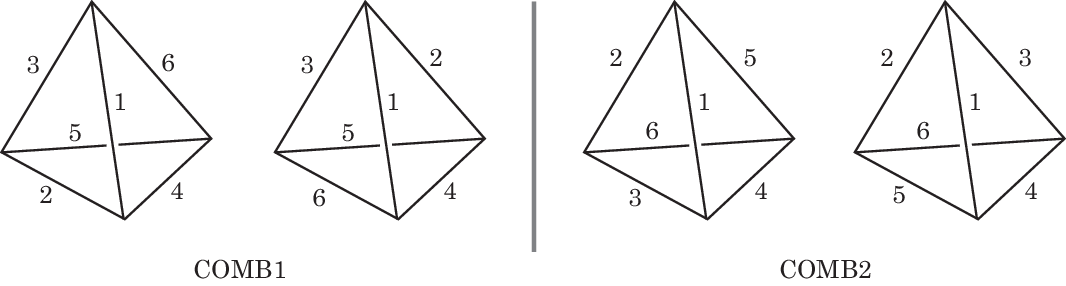}
\caption{Four possible configurations. $i$ stands for $\ell_i$.}
\label{four_config}
\end{center}
\end{figure}

Define $g(q)$ by $g(q)=f(q)+2\sum_{\mathcal{I}_{\mbox{\scriptsize opp}}} q^{d_{ij}+d_{kl}}$, where $f$ is given by \eqref{def_f}, and let $g_3(q)$ be the sum of all the terms in $g(q)$ with $d$-index less than or equal to $3$. Then \eqref{f_3} implies 
\[
g_3(q)=
-4\sum_{\mathcal{I}_\triangle} q^{d_{ij}+d_{jk}+d_{ki}}
2\sum_{\mathcal{I}_{\mbox{\scriptsize vtx}}} q^{d_{ij}+d_{ik}+d_{il}}
+2\sum_{\mathcal{I}_{\mbox{\scriptsize opp}}} \left(q^{2d_{ij}+d_{kl}}+q^{d_{ij}+2d_{kl}}\right).
\]
In each configuration in Figure \ref{four_config}, three edges labelled $1,2$ and $3$ form either a triangle or a ``Y shape'', hence $\ell_1+\ell_2+\ell_3$ always appears as the exponent of either a ``triangle'' or a ``vertex'' term in $g_3(q)$ as long as the coefficient does not cancel out with the coefficients of other terms.

\begin{lemma}
$\ell_1+\ell_2+\ell_3$ is the smallest exponent that appears in $g(q)$. 
\end{lemma}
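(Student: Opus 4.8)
The plan is to show that every exponent of $g(q)$ is at least $\ell_1+\ell_2+\ell_3$ and that equality is actually attained. Recall that $g(q)=f(q)+2\sum_{\mathcal{I}_{\mbox{\scriptsize opp}}} q^{d_{ij}+d_{kl}}$, so from \eqref{def_f} and the expansion \eqref{m_3}, $g(q)$ is obtained from $m_X(q)$ by removing all the terms of $d$-index $\le 2$ (the $n$, the $-2\sum q^{d_{ij}}$, the $2\sum q^{2d_{ij}}$, and the open $2$-path sum) together with the extra correction terms that come from the $-2\sum_{\mathcal{I}_{\rm opp}} q^{d_{ij}+d_{kl}}$ contribution. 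The net effect is that every surviving term of $g(q)$ is a term $q^{d_{i_0i_1}+\dots+d_{i_{k-1}i_k}}$ coming from \eqref{eqL} with $k\ge 2$ whose exponent has $d$-index at least $3$, plus possibly higher $d$-index terms; in particular every exponent is a sum $\sum_{i<j}a_{ij}d_{ij}$ with $\sum a_{ij}\ge 3$.

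First I would argue the lower bound: any such exponent is a sum of at least three edge lengths (with multiplicity), and since $\ell_1\le\ell_2\le\dots\le\ell_6$ are the ordered edge lengths, the smallest possible value of a sum of three edge lengths counted with multiplicity is $\ell_1+\ell_2+\ell_3$. Hence every exponent appearing in $g(q)$ is $\ge \ell_1+\ell_2+\ell_3$. Next I would show the value $\ell_1+\ell_2+\ell_3$ is genuinely present, i.e. the coefficient does not vanish. The only terms of $g(q)$ with exponent equal to $\ell_1+\ell_2+\ell_3$ are terms of $d$-index exactly $3$ using the three distinct shortest edges once each; by \eqref{f_3} (equivalently the displayed formula for $g_3(q)$), such terms are ``triangle'' terms (coefficient $-4$) or ``vertex'' terms (coefficient $+2$). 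As noted in the paragraph preceding the lemma, in each of the four possible configurations the three edges $\ell_1,\ell_2,\ell_3$ form either a triangle or a Y-shape, so $\ell_1+\ell_2+\ell_3$ occurs as the exponent of at least one triangle or vertex term. The coefficients of these terms are all of the same sign within each type, and a triangle and a vertex term cannot both carry the exponent $\ell_1+\ell_2+\ell_3$ unless further coincidences among the $\ell_i$ force it; I would rule out cancellation by checking that the total coefficient of $q^{\ell_1+\ell_2+\ell_3}$ is a nonzero integer, using that triangle terms contribute $-4$ each and vertex terms $+2$ each, and that the number of each type realizing this exponent is constrained by the configuration combinatorics (in particular one cannot have the $+2$ contributions exactly cancel the $-4$ contributions).

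The main obstacle I expect is precisely this last non-cancellation check: a priori the coefficient $-4$ from triangle terms could be exactly cancelled by $+2$ contributions from vertex terms if several of the $\ell_i$ coincide and many distinct index-triples produce the same exponent $\ell_1+\ell_2+\ell_3$. To handle this I would track multiplicities carefully — how many triangle triples and how many vertex triples among the labelled points $\{1,2,3,4\}$ can have edge-length multiset exactly $[\ell_1,\ell_2,\ell_3]$ — and observe that the parity or divisibility of the resulting total coefficient (a combination of $-4$'s and $+2$'s over a bounded number of triples in a $4$-point space) cannot be zero; alternatively, since each triangle term already appears with coefficient $-4$ and in each configuration at least one triangle containing the shortest edge realizes this exponent, one can compare against the at most three vertex terms available and conclude the sum is negative, hence nonzero. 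With the lower bound and the non-vanishing of the coefficient of $q^{\ell_1+\ell_2+\ell_3}$ established, the lemma follows.
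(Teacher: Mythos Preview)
There are two genuine gaps.

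First, your lower-bound step is wrong as stated. You write that ``the smallest possible value of a sum of three edge lengths counted with multiplicity is $\ell_1+\ell_2+\ell_3$''; with repetition allowed that minimum is $3\ell_1$, not $\ell_1+\ell_2+\ell_3$. Concretely, $g_3(q)$ contains the opposite-pair terms $q^{2d_{ij}+d_{kl}}$, whose smallest exponent is $2\ell_1$ plus the length of the edge opposite to the shortest one, and there is no a priori reason this is $\ge\ell_1+\ell_2+\ell_3$. Likewise, for $d$-index $\ge4$ you only get exponent $\ge4\ell_1$, and $4\ell_1\ge\ell_1+\ell_2+\ell_3$ is not automatic from the ordering alone. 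The paper's proof uses the ambient hypotheses you never invoke: the lemma sits inside Case~1 of Step~3, so one has the relations $\ell_2-\ell_1=\ell_5-\ell_4$ and $\ell_3-\ell_2=\ell_6-\ell_5$. Writing $u=\ell_4-\ell_3$, $v=\ell_2-\ell_1$, $w=\ell_3-\ell_2$, one computes that every $2d_{ij}+d_{kl}$ equals $\ell_1+\ell_2+\ell_3$ plus a non-negative combination of $u,v,w$ (the smallest being $2\ell_1+\ell_5=\ell_1+\ell_2+\ell_3+u$). Then the strict virtual triangle inequality gives $2\ell_1+\ell_5<4\ell_1$, and this chain bounds the $d$-index $\ge4$ part as well. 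Without these two ingredients your inequality simply does not follow.

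Second, your non-cancellation argument is not merely incomplete but false. The Corollary immediately following the lemma records that the coefficient of $q^{\ell_1+\ell_2+\ell_3}$ in $g_3(q)$ \emph{does} vanish exactly when COMB1 occurs and $u=\ell_4-\ell_3=0$: then $\ell_1+\ell_2+\ell_3=\ell_1+\ell_2+\ell_4=2\ell_1+\ell_5$, and the contributions $-4$ (triangle), $+2$ (vertex) and $+2$ (the opposite-pair term $q^{2\ell_1+\ell_5}$, which you omitted from your list of possible contributors) sum to $0$. So the lemma, as the paper proves it, is only the lower-bound statement; the subsequent Proposition distinguishes COMB1 from COMB2 by treating the generic case and the cancellation case separately. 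Your proposed parity/sign argument (``$+2$'s cannot exactly cancel $-4$'s'') cannot succeed here.
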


Put $u=\ell_4-\ell_3, v=\ell_2-\ell_1=\ell_5-\ell_4, w=\ell_3-\ell_2=\ell_6-\ell_5$ 
$(u,v,w\ge0)$. 

\begin{proof}
First note that $\ell_1+\ell_2+\ell_3$ is the minimum of $\ell_i+\ell_j+\ell_k$ with $i\ne j\ne k \ne i$, hence it gives the minimum of 
\[\{d_{ij}+d_{ik}+d_{il}\,|\,(i,j,k,l)\in \mathcal{I}_{\mbox{\footnotesize vtx}}\}\cup\{d_{ij}+d_{jk}+d_{ki}\,|\,(i,j,k)\in\mathcal{I}_\triangle\}.\]
Next since 
\[
\begin{array}{l}
COMB1\colon \,
\left\{
\begin{array}{rcl}
2\ell_1+\ell_5&=&\ell_1+\ell_2+\ell_3+u, \\
2\ell_2+\ell_6&=&\ell_1+\ell_2+\ell_3+u+2v+w, \\
2\ell_3+\ell_4&=&\ell_1+\ell_2+\ell_3+u+v+2w,
\end{array}
\right. \\[8mm]
COMB2\colon \,
\left\{
\begin{array}{rcl}
2\ell_2+\ell_4&=&\ell_1+\ell_2+\ell_3+u+v, \\
2\ell_1+\ell_6&=&\ell_1+\ell_2+\ell_3+u+w, \\
2\ell_3+\ell_5&=&\ell_1+\ell_2+\ell_3+u+2v+2w,
\end{array}
\right.
\end{array}
\]
we have 
\[
\ell_1+\ell_2+\ell_3 \le \min\{2d_{ij}+d_{kl}, d_{ij}+2d_{kl}\,|\,(i,j,k,l)\in \mathcal{I}_{\mbox{\footnotesize opp}} \}.
\]
Finally, since $\ell_1+\ell_2+\ell_3\le 2\ell_1+\ell_5$ and the strict virtual triangle inequality implies 
\[
2\ell_1+\ell_5 < 4\ell_1=\min\{\mbox{exponents that appear in $g(q)$ with $d$-index $\ge4$}\},
\]
the conclusion follows. 
\end{proof}

\begin{corollary}
Assume $v=\ell_2-\ell_1=\ell_5-\ell_4>0$ and $w=\ell_3-\ell_2=\ell_6-\ell_5>0$. 
The coefficient of $q^{\ell_1+\ell_2+\ell_3}$ in $g_3(q)$ cancels out if and only if COMBI1 occurs and $u=\ell_4-\ell_3=0$. 
\end{corollary}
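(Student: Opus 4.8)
The plan is to read the coefficient of $q^{\ell_1+\ell_2+\ell_3}$ directly off the displayed formula for $g_3(q)$ and to write it as $2V-4T+2O$, where $V$ is the number of vertices of the tetrahedron at which the three incident edge lengths sum to $\ell_1+\ell_2+\ell_3$, $T$ the number of faces whose three edge lengths sum to $\ell_1+\ell_2+\ell_3$, and $O$ the number of exponents among $\{2d_{ij}+d_{kl},\,d_{ij}+2d_{kl}\mid(i,j,k,l)\in\mathcal{I}_{\rm opp}\}$ that equal $\ell_1+\ell_2+\ell_3$. Since $\ell_1+\ell_2+\ell_3$ is the smallest possible sum of three of the six edge lengths, a vertex or a face contributes to $V$ or $T$ exactly when the multiset of its three edge lengths equals $\{\ell_1,\ell_2,\ell_3\}$, so these counts are easy to pin down.

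For the opposite-pair term I would use the identities already recorded above --- in ${\rm COMB1}$, $2\ell_1+\ell_5=\ell_1+\ell_2+\ell_3+u$, $2\ell_2+\ell_6=\ell_1+\ell_2+\ell_3+u+2v+w$, $2\ell_3+\ell_4=\ell_1+\ell_2+\ell_3+u+v+2w$, and in ${\rm COMB2}$, $2\ell_2+\ell_4=\ell_1+\ell_2+\ell_3+u+v$, $2\ell_1+\ell_6=\ell_1+\ell_2+\ell_3+u+w$, $2\ell_3+\ell_5=\ell_1+\ell_2+\ell_3+u+2v+2w$. Together with $v,w>0$ and the fact that for every opposite pair the exponent obtained by doubling the larger of its two lengths exceeds $\ell_1+\ell_2+\ell_3$, these show that the only opposite-pair exponent of $d$-index $3$ that can equal $\ell_1+\ell_2+\ell_3$ is $2\ell_1+\ell_5$, which does so precisely in ${\rm COMB1}$ with $u=0$; hence $O=1$ in that case and $O=0$ otherwise. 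For $V$ and $T$, I would note that the edges opposite to $\ell_1,\ell_2,\ell_3$ are $\ell_5,\ell_6,\ell_4$ in ${\rm COMB1}$ and $\ell_6,\ell_4,\ell_5$ in ${\rm COMB2}$; in both combinations $\ell_1,\ell_2,\ell_3$ are therefore pairwise non-opposite, so --- being three pairwise-adjacent edges of $K_4$ --- they span either a claw at one vertex or a face, and exactly one of the two.

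I would then split according to $u$. If $u>0$, all six lengths are distinct and only that single claw-or-face realizes the minimal sum, so $(V,T)\in\{(1,0),(0,1)\}$ and $O=0$, whence the coefficient is $2$ or $-4$, in particular nonzero. If $u=0$, then $\ell_3=\ell_4$ and the length multiset $\{\ell_1,\ell_2,\ell_3\}$ is realized by the $\ell_1$-edge, the $\ell_2$-edge, and either of the two edges of length $\ell_3$, namely the original $\ell_3$-edge $e$ and the original $\ell_4$-edge $e'$. The decisive observation is the position of $e$ and $e'$ relative to the common vertex of the $\ell_1$-edge and the $\ell_2$-edge. In ${\rm COMB1}$ the edges $e$ and $e'$ are opposite, so exactly one of them passes through that vertex while the other joins the remaining two endpoints; hence the triple formed by the $\ell_1$-edge, the $\ell_2$-edge and $e$, and the triple formed by the $\ell_1$-edge, the $\ell_2$-edge and $e'$, yield one claw and one face, so $V=T=1$ and the coefficient equals $2-4+2=0$. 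In ${\rm COMB2}$, $e'$ is opposite the $\ell_2$-edge, so the triple through $e'$ is not even pairwise adjacent and contributes nothing; only the triple through $e$ survives, giving $(V,T)\in\{(1,0),(0,1)\}$, $O=0$, and coefficient $2$ or $-4$, again nonzero. Assembling the cases shows that the coefficient of $q^{\ell_1+\ell_2+\ell_3}$ in $g_3(q)$ vanishes if and only if ${\rm COMB1}$ occurs and $u=0$.

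The main obstacle is this last bookkeeping for $V$ and $T$ when $\ell_3=\ell_4$: one has to verify carefully that the coincidence of lengths forces a claw and a face simultaneously on the three shortest edges in ${\rm COMB1}$ --- so that the contributions $+2$, $-4$ and $+2$ exactly cancel --- while it does not do so in ${\rm COMB2}$, because there the second edge of length $\ell_3$ is opposite the $\ell_2$-edge rather than the original $\ell_3$-edge. Everything else is routine, and the four explicit configurations of Figure~\ref{four_config} serve as a useful cross-check.
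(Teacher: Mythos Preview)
Your argument is correct and follows the same line as the paper: read off the coefficient of $q^{\ell_1+\ell_2+\ell_3}$ from the three displayed sums in $g_3(q)$, use the offset identities from the preceding Lemma to see that the only opposite-pair exponent that can hit $\ell_1+\ell_2+\ell_3$ is $2\ell_1+\ell_5$ in ${\rm COMB1}$ with $u=0$, and then check that in that situation the edges of length $\ell_3$ and $\ell_4$ are opposite, so that $\{1,2,3\}$ and $\{1,2,4\}$ yield exactly one claw and one face, giving $2-4+2=0$. The paper itself treats this Corollary tersely---it gives no formal proof, only a one-line remark after the statement explaining the cancellation $\ell_1+\ell_2+\ell_3=\ell_1+\ell_2+\ell_4=2\ell_1+\ell_5$ in the ${\rm COMB1}$, $u=0$ case---so your write-up is in fact more complete, supplying the non-cancellation direction (the ${\rm COMB2}$ and $u>0$ cases) that the paper leaves implicit.
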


In this case $\ell_1+\ell_2+\ell_3=\ell_1+\ell_2+\ell_4=2\ell_1+\ell_5$, one of $q^{\ell_1+\ell_2+\ell_3}$ and $q^{\ell_1+\ell_2+\ell_4}$ has coefficient $-4$ and the other $2$, and $q^{2\ell_1+\ell_5}$ has coefficient $2$ in $g_3(q)$.

\begin{proposition}
In Case 1 the combination of opposite edges can be determined from the information of exponents of $q$ in $g(q)$. 
\end{proposition}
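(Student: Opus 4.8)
\emph{Plan.} The idea is to single out, from the multiset of exponents that actually occur in $g(q)$, one \emph{test exponent} whose presence or absence forces the pairing. Throughout we already know $\widetilde{\mathcal{P}}_1=[\ell_1,\dots,\ell_6]$ with $\ell_1\le\dots\le\ell_6$, hence the numbers $u=\ell_4-\ell_3$, $v=\ell_2-\ell_1=\ell_5-\ell_4$ and $w=\ell_3-\ell_2=\ell_6-\ell_5$ of Case 1, and the task is to decide whether the pairing of opposite edges is ${\rm COMB1}$ (pairs $\{\ell_1,\ell_5\},\{\ell_3,\ell_4\},\{\ell_2,\ell_6\}$) or ${\rm COMB2}$ (pairs $\{\ell_2,\ell_4\},\{\ell_1,\ell_6\},\{\ell_3,\ell_5\}$). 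First dispose of the degenerate subcases: if $v=0$ or $w=0$ then (using $\ell_1=\ell_2,\ \ell_4=\ell_5$, resp.\ $\ell_2=\ell_3,\ \ell_5=\ell_6$) ${\rm COMB1}$ and ${\rm COMB2}$ are the \emph{same} multiset of unordered length-pairs, so nothing has to be decided. Hence assume $v>0$ and $w>0$, so that $\ell_1<\ell_2<\ell_3\le\ell_4<\ell_5<\ell_6$ and, when $u>0$, the finitely many linear forms in $u,v,w$ appearing below are pairwise distinct.

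\emph{Reduction to low-order terms.} By the Lemma every exponent of $g(q)$ is $\ge\ell_1+\ell_2+\ell_3$, and every exponent of $d$-index $\ge4$ is $\ge4\ell_1>2\ell_1+\ell_5$ by the strict virtual triangle inequality; since also $\ell_1+\ell_2+\ell_3\le2\ell_1+\ell_5$ (equivalent to $u\ge0$), for every exponent $\le2\ell_1+\ell_5$ the coefficient in $g(q)$ coincides with the coefficient in $g_3(q)$, which is governed by the displayed formula for $g_3$: four ``triangle'' sums $d_{ij}+d_{jk}+d_{ki}$ with coefficient $-4$, four ``vertex'' sums $d_{ij}+d_{ik}+d_{il}$ with coefficient $+2$, and six ``opp-$3$'' sums $2d_{ij}+d_{kl},\ d_{ij}+2d_{kl}$ (one such pair per pair of opposite edges) with coefficient $+2$. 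Note that the eight triangle/vertex (``transversal'') triples are exactly the eight choices of one edge from each pair of opposite edges, so their multiset of \emph{sums} is determined by the pairing alone, even though which of them are faces and which are vertex-stars is not.

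\emph{The test exponent.} If $u=0$, the Corollary applies verbatim: the coefficient of $q^{\ell_1+\ell_2+\ell_3}$ in $g(q)$ vanishes exactly when the pairing is ${\rm COMB1}$, so ${\rm COMB1}$ occurs iff $q^{\ell_1+\ell_2+\ell_3}$ is absent from $g(q)$. If $u>0$, I take as test exponent $2\ell_1+\ell_5$, which equals $\ell_1+\ell_2+\ell_4=\ell_1+\ell_2+\ell_3+u$ and is $<4\ell_1$, and claim it occurs in $g(q)$ iff the pairing is ${\rm COMB1}$. For ${\rm COMB1}$: $2\ell_1+\ell_5$ is the opp-$3$ sum attached to the opposite pair $\{\ell_1,\ell_5\}$ (coefficient $+2$) and is also the transversal sum $\ell_1+\ell_2+\ell_4$ of exactly one of the eight triangles/vertex-stars (coefficient $-4$ or $+2$), while no further transversal or opp-$3$ sum equals it; rewriting everything via $\ell_2=\ell_1+v$, $\ell_3=\ell_1+v+w$, $\ell_4=\ell_1+v+w+u$, $\ell_5=\ell_1+2v+w+u$, $\ell_6=\ell_1+2v+2w+u$ makes the total coefficient $-2$ or $+4$, in particular nonzero. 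For ${\rm COMB2}$: the same rewriting shows that none of the eight transversal sums and none of the six opp-$3$ sums equals $2\ell_1+\ell_5$, so its coefficient in $g_3(q)$, hence in $g(q)$, is $0$. Thus in every subcase the set of exponents of $g(q)$ determines the combination of opposite edges, which is the assertion.

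\emph{Where the difficulty lies.} The coefficients of the transversal terms of $g_3(q)$ are \emph{not} available a priori --- telling faces from vertex-stars is precisely the residual information recovered only in Step 4 --- so one cannot simply write down $g_3(q)$ and compare the two candidates. The proof must instead exhibit an exponent whose total coefficient is forced either way: nonzero for ${\rm COMB1}$ because the single $+2$ coming from an opp-$3$ term cannot be cancelled by the one transversal term sharing that exponent (the other potential cancelling partner, the transversal $\ell_1+\ell_2+\ell_3$, is separated from it precisely by $u>0$, which is exactly why $u=0$ has to be handled via the Corollary), and exactly $0$ for ${\rm COMB2}$ because no term of $g_3(q)$ lands there at all. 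Verifying these two ``nothing else lands here'' statements is the one unavoidable computation; it reduces to checking a handful of non-coincidences among linear forms in $u,v,w$ subject to $u\ge0$, $v>0$, $w>0$ and the svti bounds, of the kind checked with Maple elsewhere in the paper.
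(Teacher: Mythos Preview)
Your argument is correct and tracks the paper's own proof closely. Both proofs dispose of $v=0$ or $w=0$ identically, invoke the Corollary for $u=0$, and for $u>0$ use the exponent $2\ell_1+\ell_5=\ell_1+\ell_2+\ell_4$ as the discriminator: the paper phrases the test as ``is the second-smallest exponent of $g(q)$ equal to $2\ell_1+\ell_5$?'', while you phrase it as ``does $2\ell_1+\ell_5$ occur at all?'' --- these are equivalent since you (and the paper) verify that this exponent has nonzero coefficient under ${\rm COMB1}$ and zero coefficient under ${\rm COMB2}$, and that no $d$-index $\ge4$ term can land there. One small overstatement: your parenthetical ``when $u>0$, the finitely many linear forms in $u,v,w$ appearing below are pairwise distinct'' is false in general (e.g.\ $v=w$ forces some coincidences among the transversal/opp-$3$ sums), but your actual argument only uses the specific non-coincidences at the test exponent, and those do hold for all $u,v,w>0$.
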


\begin{proof}
Suppose $w=0$. Then $\ell_2=\ell_3$ and $\ell_5=\ell_6$, which implies COMB1$=$COMB2. Similarly, $v=0$ also implies COMB1$=$COMB2. 
Therefore, we have only to consider the case when $v$ and $w$ are both positive. 

When $u$ is positive, COMB1 occurs if the next smallest exponent in $g(q)$ is $2\ell_1+\ell_5=\ell_1+\ell_2+\ell_4$, COMB2 otherwise (i.e. if the next smallest exponent in $g(q)$ is either $2\ell_2+\ell_4=\ell_1+\ell_2+\ell_5$ or $2\ell_1+\ell_6=\ell_1+\ell_3+\ell_4$). 
Remark that the coefficients of $q^{2\ell_1+\ell_5}$ etc. do not vanish. 

When $u=0$, COMB1 if the coefficient of $q^{\ell_1+\ell_2+\ell_3}$ is $0$, COMB2 if it is $-4$ or $2$. 
\end{proof}

The proof for Case 2 can be carried out similarly. 

\medskip
Step 4. Assume that the combination of opposite edges is known, namely we know a multiset of pairs of opposite edge lengths 
\[
[\,[\ell_\a, \ell_\beta], [\ell_\ga, \ell_\de], [\ell_\la, \ell_\mu]\,|\,\mbox{condition (1) in Lemma \ref{lemma_2-ii}}\,]
\]
At this point, there are at most two possibilities for an isometric class of four points, by swapping one of the pairs of opposite edges. 
By permutation of indices, we may assume without loss of generality that $d_{14}$ and $d_{23}$ are the lengths of the edges to be swapped. 

Recall $Z_X(t)=\left(\exp(-td_{ij})\right)_{i,j}$ and the magnitude function $M_X(t)$ is the sum of all the entries of $Z_X(t)^{-1}$. 
The strict virtual triangle inequality condition implies that the strict triangle inequality condition in Proposition \ref{pos_Mpd} (2) is satisfied, and hence $\de_3$ is positive. 
Direct computation of \eqref{M1} shows that $M_1=\lim_{t\to0^+}M_X'(t)$ is given by\footnote{We remark that the denominator $\de_3$ was already given in the proof of Proposition \ref{pos_Mpd} (2). } 
\begin{equation}\label{M'(t)at0}
\frac{-d_{12}^{\,2}d_{34}^{\,2}-d_{13}^{\,2}d_{24}^{\,2}-d_{14}^{\,2}d_{23}^{\,2}+2d_{13}d_{14}d_{23}d_{24}+2d_{12}d_{14}d_{23}d_{34}+2d_{12}d_{13}d_{24}d_{34}}{-2\sum_{\mathcal{I}_{\mbox{\rm \scriptsize opp}}}\left(d_{ij}^{\,2}d_{kl}+d_{ij}d_{kl}^{\,2}\right)-2\sum_{\mathcal{I}_\triangle}d_{ij}d_{jk}d_{ik}+2\sum_{\mathcal{I}_\scsotp}d_{ij}d_{jk}d_{kl}} .
\end{equation}

\begin{lemma}
{\rm (1)} Under strict virtual triangle inequality condition, the numerator is positive. 

\medskip\noindent
{\rm (2)} The numerator is symmetric in $d_{14}$ and $d_{23}$. 

\medskip\noindent
{\rm (3)} The difference in the denominator caused by exchanging $d_{14}$ and $d_{23}$ is given by 
\begin{equation}\label{dd}
\begin{array}{l}
\displaystyle \de_3(d_{12},d_{13},d_{14},d_{23},d_{24},d_{34})-\de_3(d_{12},d_{13},d_{23},d_{14},d_{24},d_{34}) 
%
\displaystyle =
2(d_{12}-d_{34})(d_{13}-d_{24})(d_{14}-d_{23}).
\end{array}
\end{equation}
\end{lemma}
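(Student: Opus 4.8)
The plan is to treat the three parts separately; only part (1) carries any content, and parts (2) and (3) reduce to inspection and a routine expansion. For part (2) I would simply look at the numerator of \eqref{M'(t)at0}: the monomial $-d_{14}^{\,2}d_{23}^{\,2}$ and the monomials $2d_{13}d_{14}d_{23}d_{24}$ and $2d_{12}d_{14}d_{23}d_{34}$ are each invariant under the transposition $d_{14}\leftrightarrow d_{23}$, while the remaining terms $-d_{12}^{\,2}d_{34}^{\,2}-d_{13}^{\,2}d_{24}^{\,2}+2d_{12}d_{13}d_{24}d_{34}$ do not involve $d_{14}$ or $d_{23}$ at all, so the numerator is symmetric in $d_{14}$ and $d_{23}$.

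For part (1) the key point is the substitution $P=d_{12}d_{34}$, $Q=d_{13}d_{24}$, $R=d_{14}d_{23}$, the three products of pairs of opposite edges. With it, the numerator of \eqref{M'(t)at0} becomes exactly $-P^2-Q^2-R^2+2PQ+2QR+2RP$, the same quadratic form in $P,Q,R$ that expressed $\de_2$ for three points in \eqref{cd2}. This form factors as $(\sqrt P+\sqrt Q+\sqrt R)(-\sqrt P+\sqrt Q+\sqrt R)(\sqrt P-\sqrt Q+\sqrt R)(\sqrt P+\sqrt Q-\sqrt R)$, so it is positive precisely when $\sqrt P,\sqrt Q,\sqrt R$ satisfy the strict triangle inequality. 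Finally I would check that (svti) forces this: if $m$ and $M$ denote the smallest and largest edge lengths, then each of $\sqrt P,\sqrt Q,\sqrt R$ lies in $[m,M]$, so the largest of the three is at most $M<2m\le$ the sum of the other two (each being $\ge m$), and $M<2m$ is precisely (svti).

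For part (3) I would substitute $d_{14}\leftrightarrow d_{23}$ into the formula for $\de_3$ from Proposition \ref{pos_Mpd}(2) and expand. The computation is organized by the observation that the ``opposite-pair'' sum $\sum_{\mathcal{I}_{\mathrm{opp}}}(d_{ij}^{\,2}d_{kl}+d_{ij}d_{kl}^{\,2})$ is itself unchanged by this transposition — because $\{d_{14},d_{23}\}$ is one of the three opposite pairs — so only the triangle sum $\sum_{\mathcal{I}_\triangle}d_{ij}d_{jk}d_{ik}$ and the simple-open-$3$-path sum $\sum_{\mathcal{I}_\scsotp}d_{ij}d_{jk}d_{kl}$ contribute to the difference in \eqref{dd}. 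Since that difference is antisymmetric under $d_{14}\leftrightarrow d_{23}$ it is divisible by $d_{14}-d_{23}$, and collecting the surviving monomials identifies the quotient as $2(d_{12}-d_{34})(d_{13}-d_{24})$; this is a finite polynomial identity that can be confirmed with Maple.

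The only real obstacle is part (1), and even there it is just spotting that the substitution $P=d_{12}d_{34}$, $Q=d_{13}d_{24}$, $R=d_{14}d_{23}$ collapses the numerator to the familiar three-point discriminant $\de_2$; once that is seen, positivity under (svti) is a one-line estimate, and parts (2) and (3) are then inspection and a routine expansion.
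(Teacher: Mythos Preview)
Your proposal is correct and follows essentially the same line as the paper. For part~(1) both you and the paper reduce the numerator to a quadratic in the opposite-edge products $P=d_{12}d_{34}$, $Q=d_{13}d_{24}$, $R=d_{14}d_{23}$; you then invoke the Heron-type factorization and the bound $m\le\sqrt{P},\sqrt{Q},\sqrt{R}\le M<2m$, whereas the paper normalizes by $P$ and completes the square to $4s-((t-s)-1)^2$ with $1\le s\le t<4$ --- two routes to the same estimate. For part~(3) the paper sharpens your observation: not only is the opposite-pair sum invariant under $d_{14}\leftrightarrow d_{23}$, but so is the simple-open-$3$-path sum $\sum_{\mathcal{I}_\scsotp}d_{ij}d_{jk}d_{kl}$, so the entire difference in \eqref{dd} comes from the four triangle terms alone; you would of course see this once you carried out the expansion.
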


\begin{proof}
(1) We may assume without loss of generality that $d_{12}d_{34}\le d_{13}d_{24} \le d_{14}d_{23}$. Putting $s=d_{13}d_{24}/d_{12}d_{34}, t=d_{14}d_{23}/d_{12}d_{34}$, the numerator can be expressed as $\left(4s-((t-s)-1)^2\right)d_{12}^{\,2}d_{34}^{\,2}$, which is positive since the strict virtual triangle inequality condition implies $1\le s\le t<4$. 

\medskip
(2) Obvious. 

\medskip
(3) By direct computation. Note that the difference comes from the $\mathcal{I}_\triangle$-terms since the terms coming from $\mathcal{I}_\sotp$ cancel each other. 
\end{proof}

Note that if $\eqref{dd}$ vanishes, there is only one possible configuration up to isometry. Therefore we have 

\begin{corollary} 
Under the strict virtual triangle inequality condition, if the three pairs of opposite edges are known, then the four-point set is determined by the magnitude function.
\end{corollary}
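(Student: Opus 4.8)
The plan is to separate the two remaining candidates by a single number, namely $M_1=\lim_{t\to0^+}M_X'(t)$. First I would recall the reduction already in place: once the three pairs of opposite edges are known, an isometry class of a four-point set is pinned down up to one binary choice, the interchange of the two lengths inside one of the three pairs, which after relabelling the vertices we may take to be the pair $\{d_{14},d_{23}\}$. Write $T$ for the metric space with edge lengths $(d_{12},d_{13},d_{14},d_{23},d_{24},d_{34})$ and $T'$ for the one obtained from it by interchanging $d_{14}$ and $d_{23}$. It then suffices to show that $T$ and $T'$ are either isometric or distinguished by $M_X(t)$.

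I would split into two cases according to whether $(d_{12}-d_{34})(d_{13}-d_{24})(d_{14}-d_{23})$ vanishes. If it does, then (as already observed just before the statement) $T\cong T'$ and there is nothing to decide: if $d_{14}=d_{23}$ the interchange is trivial; if $d_{13}=d_{24}$, the vertex permutation $(1\,2)(3\,4)$ carries $T'$ to $T$, since it swaps the lengths of $P_1P_4$ and $P_2P_3$ and of $P_1P_3$ and $P_2P_4$ while fixing $P_1P_2$ and $P_3P_4$; and if $d_{12}=d_{34}$, the permutation $(1\,3)(2\,4)$ does the same. Each of these amounts to a one-line check on the six edge lengths.

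In the remaining case $(d_{12}-d_{34})(d_{13}-d_{24})(d_{14}-d_{23})\ne0$ I would argue as follows. By the Lemma above, the numerator of the expression \eqref{M'(t)at0} for $M_1$ is symmetric in $d_{14}$ and $d_{23}$, so $T$ and $T'$ have the same numerator, which is moreover positive under the strict virtual triangle inequality. The denominator of \eqref{M'(t)at0} is $\delta_3$, which is positive for both $T$ and $T'$ by Proposition \ref{pos_Mpd}(2) (the strict virtual triangle inequality forces all triangle inequalities to be strict), and by \eqref{dd} it changes by $2(d_{12}-d_{34})(d_{13}-d_{24})(d_{14}-d_{23})\ne0$ under the interchange. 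Hence $M_1(T)$ and $M_1(T')$ are two distinct positive numbers. Since $M_1$ is extracted from the magnitude function by \eqref{M1} — which applies here because $\delta_{n-1}=\delta_3>0$ — the value of $M_1$ determines which of $T$ and $T'$ is the actual space. Combined with the degenerate case, this shows the four-point set is determined by the magnitude function.

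I expect the only point requiring genuine verification, as opposed to invoking formulas already established, to be the isometry claim in the degenerate case: one must exhibit the explicit vertex permutations $(1\,2)(3\,4)$ and $(1\,3)(2\,4)$ and check that they send $T'$ to $T$, which comes down to a handful of equalities of edge lengths. Everything else is immediate from \eqref{M'(t)at0}, the Lemma above, Proposition \ref{pos_Mpd}, and \eqref{M1}.
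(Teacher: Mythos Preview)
Your proposal is correct and follows essentially the same approach as the paper: split according to whether $(d_{12}-d_{34})(d_{13}-d_{24})(d_{14}-d_{23})$ vanishes, invoke the isometry in the degenerate case, and otherwise use that the numerator of \eqref{M'(t)at0} is positive and symmetric while the denominator $\delta_3$ changes by a nonzero amount, so $M_1$ separates the two candidates. Your treatment of the degenerate case is in fact more explicit than the paper's, which simply asserts that ``there is only one possible configuration up to isometry'' without naming the vertex permutations $(1\,2)(3\,4)$ and $(1\,3)(2\,4)$; your verification of these is correct.
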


\medskip
This completes the proof of (4) and therefore all of Theorem \ref{theorem}. 
\hfill$\Box$

\subsection{Proof of Proposition \ref{complete_graph}} 
$X$ with $\#X=n$ is a complete graph if and only if the multiplicity of the shortest edge length is $N={n\choose2}$, which can be seen from the coefficient of the first term except for the constant term in $m_X(q)$. 
\hfill$\Box$

\section*{Acknowledgments} 
The author thanks Kiyonori Gomi for helpful suggestions. 
He also thanks the anonymous reviewer for careful reading and for the information of \eqref{eqL}, which simplifies the previous proofs.   

\bibliographystyle{amsplain}


\begin{dajauthors}
\begin{authorinfo}[jo]
  Jun O'Hara\\
Department of Mathematics and Informatics, \\
Faculty of Science, Chiba University\\
1-33 Yayoi-cho, Inage, Chiba, 263-8522, JAPAN  \\
  ohara@math.s.chiba-u.ac.jp \\
  \url{https://sites.google.com/site/junohara/}
\end{authorinfo}
\end{dajauthors}

\end{document}